\documentclass[fleqn]{amsart}

\usepackage{latexsym, amsfonts, amsmath, amssymb, euscript}
\bibliographystyle{amsplain}

\newtheorem{thm}{Theorem}[section]
\newtheorem{defn}[thm]{Definition}
\newtheorem{prop}[thm]{Proposition}
\newtheorem{cor}[thm]{Corollary}
\newtheorem{lem}[thm]{Lemma}
\newtheorem{rem}[thm]{Remark}
\newtheorem{exa}[thm]{Example}

\newcommand{\norm}[3]{\ensuremath{\left\Vert#1\right\Vert_{#2}^{#3}}}

\newcommand{\eps}{\varepsilon}
%%%%\linespread{1.2}

% if you use PostScript figures in your article
% use the graphics package for simple commands
% \usepackage{graphics}
% or use the graphicx package for more complicated commands
% \usepackage{graphicx}
% or use the epsfig package if you prefer to use the old commands
% \usepackage{epsfig}

% The amssymb package provides various useful mathematical symbols

% The lineno packages adds line numbers. Start line numbering with
% \begin{linenumbers}, end it with \end{linenumbers}. Or switch it on
% for the whole article with \linenumbers.
% \usepackage{lineno}

% \linenumbers

\begin{document}

\title[]{Admissibility of Frech\'et spaces}
\author[]{ Maciej Ciesielski}
\address{Institute of Mathematics, Poznan University of Technology, Piotrowo 3A, 60-965 Poznan, Poland}
\email{maciej.ciesielski@put.poznan.pl}
 \author[]{ Grzegorz Lewicki}
 \address{Department of Mathematics and Computer Science,
 Jagiellonian University,
30-048 Krak\'ow, \L ojasiewicza 6,  Poland }
\email{grzegorz.lewicki@im.uj.edu.pl}

\date{\today}
\begin{abstract}
The aim of this paper is to to show the admissibility of some  class of Frech\'et spaces (see Definition \ref{def32}). In particular, this generalizes the main results of \cite{CL}.
As an application, we show the admissibility of a large class modular spaces equipped with $F$-norms determined in Theorem \ref{increasing}. It is worth noticing that 
$F$-norms introduced in Theorem \ref{increasing} generalize the classical Luxemburg $F$-norm. Also a linear version of admissibility (so called metric approximation property) for order continuous symmetric spaces will be demonstrated  (see Theorem \ref{thm:approx:1}).

\end{abstract}
\maketitle

\noindent{\em Keywords:} Admissible space, modular function spaces, symmetric spaces

\noindent{\em AMS~Classification:} 46A80, 46E30. 
\section{Introduction} 
The notion of admissibility, introduced by Klee in \cite{Klee}, 
allows
one to approximate the identity on compact sets by finite-dimensional maps.
Locally convex
spaces are admissible  (see \cite{Na}), and a large literature is devoted to prove that particular classes of non-locally convex function spaces are admissible \cite{CLTT,R1,R2,M,I}. It is important to notice that not all non-locally convex spaces are admissible, in \cite{Cauty} Cauty provides an
example of a metric linear space in which the admissibility fails.

The aim of this paper is to prove the admissibility of a large class of Frech\'et spaces introduced in Definition \ref{def32} (so called {\it $F$-admisssible spaces}; see Theorem \ref{thm1}.) In particular, this generalizes earlier results obtained 
in \cite{CL} for modular function spaces introduced by W. M. Koz\l owski in \cite{Koz}. As an application we prove two fixed point theorems in $F$- admissible spaces (see Theorem \ref{fixpoint} and Theorem \ref{fixpoint2}). Also a linear version of admissibility (so called metric approximation property) for order continuous symmetric spaces will be demonstrated  (see Theorem \ref{thm:approx:1}). Next we apply these results to the large class of modular spaces equipped with $F$-norms introduced in Theorem \ref{increasing}. 
The main interest of the admissibility of modular spaces lies in the possibility of applying the result to the fixed point theory. The fixed point theory in modular spaces, initiated in 1990 by M. A. Khamsi, W. M.
Koz\l owski and S. Reich \cite{KKR},
is quite a recent topic in the theory of nonlinear operators, see e.g. \cite{Kh1}, \cite{Kh2}, \cite{KhKoz1}, \cite{KhKoz2}. The advantage of the theory is that even though a metric may not be defined, many problems in metric fixed point theory can still be possibly formulated in modular spaces.

Now we recall the definition of admissibility.
\begin{defn} {\rm \cite{Klee}
 Let $E$ be a Haudorff topological vector space. A subset $Z$ of
$E$
 is said to be} admissible {\rm if for every compact subset $K$ of $Z$
and for every neighborhood $V$ of zero in $E$ there exists a
continuous mapping $H:  K \to Z$ such that dim(span $[H(K)]$)$<
+\infty$ and $x-Hx \in V$ for every $x \in K$. If $Z=E$ we say that
the space  $E$ is} admissible.
\end{defn}
\section{Preliminary results}
We start with the following defintions.
\begin{defn}
\label{def31a}
Let $X$ be a linear space over $\mathbb{K}$, $\mathbb{K} =\mathbb{R}$ or $\mathbb{K} = \mathbb{C}.$ 
A mapping $ |\cdot |_F:X\rightarrow[0,+\infty)$ is said to be an $F$-norm if it satisfies the following conditions
\begin{itemize}
	\item[$(i)$] $|x|_F = 0$ if and only if $x=0$;
	\item[$(ii)$] $ |x|_F = |ax|_F $ for all $x\in X,$ $a \in \mathbb{K}, $ $|a|=1$; 
	\item[$(iii)$] $|x+y|_F \leq |x|_F + |y|_F $ for all $x,y\in{X}$;
	\item[$(iv)$] $|\lambda_n x_n-\lambda x|_F \rightarrow{0}$ whenever $|x_n-x|_F\rightarrow{0}$ and $\lambda_n\rightarrow\lambda$ for any $x\in{X}$, $(x_n)\subset{X}$, $\lambda\in\mathbb{K}$ and $(\lambda_n)\subset\mathbb{K}$.
\end{itemize}
The space $X$ equipped with $F$-norm is called an $F$-space. An $F$-space is called a Frech\'et space  if $X$ is a complete metric space with respect to the metric introduced by $F$-norm.
\end{defn}
\begin{defn}
 \label{def31}
 Let $ (T, \Sigma, \mu) $ be a measure space with a $ \sigma$-finite measure $ \mu$ and let $ (W, \| \cdot\|)$ be a Banach space. A function $s: T \rightarrow W$ is called a simple function if $ s = \sum_{i=1}^n w_i \chi(E_i)$ 
 where $ \{E_i\}_{i=1}^n$ is a partition of $T,$ and for $i=1,...,n,$ $ E_i \in \Sigma,$ $ \mu(E_i)>0,$ $w_i \in W.$ A function $f:T \rightarrow W$ is called measurable if there exists a sequence of simple functions $\{s_n\}$ such that
 $\lim_n s_n(t) = f(t)$ $\mu$-almost everywhere. By $ L_o(T)$ we denote the set of all measurable functions from $ T$ into $W$ (with equality $\mu$-almost everywhere).
\end{defn}
\begin{defn}
\label{def32}
Let $ (T, \Sigma, \mu) $ be a measure space with a $ \sigma$-finite measure $ \mu$ and let $(W,\norm{\cdot}{}{})$ be a Banach space. Let 
$$
S_F = \{ s \in L_o(T): s \hbox{ is a simple function }, \mu(supp(s)) \in [0, + \infty) \}. 
$$
A Frech\'et space $ (X, | \cdot|_F)$ is called $F$-admissible if the following conditions are satisfied:

a. $X \subset  L_o(T);$

b. $S_F \subset X,$

c. If $f, g \in X$ and $ \|f(t)\| \leq \|g(t)\| $ $\mu$-almost everywhere, then $f \in X$ and $ |f|_F \leq |g|_F;$

d. for any sequence $ \{w_n\} \subset W$ and $ A \in \Sigma, $ $ \mu(A) < \infty $ if $ \|w_n\| \rightarrow 0$ then $ |w_n \chi(A)|_F \rightarrow 0;$

e. for any sequence $ \{ A_n \} \subset \Sigma $ and $ w \in W \setminus \{0\}$ $ |w\chi(A_n)|_F \rightarrow 0$ iff $ \mu(A_n) \rightarrow 0;$  

f. if $ f \in X,$ $ \{ f_n\} \subset X,$ $f_n \rightarrow f$ $\mu$-almost everywhere and $ \|f_n(t)\| \leq \|f(t)\|$ for any $t \in T, $ then 
$$
|f_n-f|_F \rightarrow 0.
$$ 
\end{defn}
Now we state a version of the Jegoroff Theorem for $L_o(T).$
\begin{lem}
\label{lem31}
Let $ (T, \Sigma, \mu) $ be a measure space, $ 0<\mu(T) < \infty$ and let $ W$ be a Banach space. Let $\{f_n\} \subset L_o(T)$ satisfies the Cauchy condition with respect to the convergence $\mu$-almost everywhere. 
Then, for any $ m >0 $ there exists $ A_m \in \Sigma, $ $ \mu(A_m) < m $ such that for any $ \epsilon >0$ there exists $ n_o \in \mathbb{N}$ such that
$$
sup \{ \|f_q(t) - f_p(t)\|: t \in T \setminus A_m\} \leq \epsilon
$$
provided $ p,q \geq n_o.$ 
\end{lem}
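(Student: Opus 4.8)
The plan is to mimic the classical proof of Egoroff's theorem, replacing convergence by the Cauchy condition and scalar values by norms of Banach-space-valued functions. For natural numbers $k$ and $n$ I would set
\[
E_n^k = \bigcup_{p,q \geq n} \{ t \in T : \|f_p(t) - f_q(t)\| > 1/k \}.
\]
Each set $\{t : \|f_p(t)-f_q(t)\| > 1/k\}$ is measurable, since by Definition \ref{def31} the difference $f_p - f_q$ is measurable and hence so is the real-valued function $t \mapsto \|f_p(t)-f_q(t)\|$; therefore $E_n^k$, being a countable union of measurable sets, belongs to $\Sigma$. Observe that $t \notin E_n^k$ means precisely that $\|f_p(t)-f_q(t)\| \leq 1/k$ for all $p,q \geq n$, which is exactly the quantity I want to control on the good set.

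For fixed $k$ the sequence $(E_n^k)_n$ is decreasing in $n$, and its intersection $\bigcap_n E_n^k$ is contained in the set of points $t$ at which $\{f_n(t)\}$ fails to be a Cauchy sequence in $W$ (a fixed gap $1/k$ recurs infinitely often). By hypothesis this set has measure zero, so $\mu\big(\bigcap_n E_n^k\big) = 0$. Here I would invoke the finiteness of the measure, $\mu(T) < \infty$: continuity of $\mu$ from above then yields $\mu(E_n^k) \to 0$ as $n \to \infty$. Consequently, given $m > 0$, for each $k$ I can choose an index $n_k$ with $\mu(E_{n_k}^k) < m/2^{k+1}$.

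Finally I would put $A_m = \bigcup_{k=1}^\infty E_{n_k}^k$, so that $A_m \in \Sigma$ and $\mu(A_m) \leq \sum_{k=1}^\infty m/2^{k+1} = m/2 < m$. To verify the uniform Cauchy condition on $T \setminus A_m$, fix $\epsilon > 0$ and choose $k$ with $1/k \leq \epsilon$. If $t \in T \setminus A_m$ then $t \notin E_{n_k}^k$, hence $\|f_p(t)-f_q(t)\| \leq 1/k \leq \epsilon$ for all $p,q \geq n_k$; taking $n_o = n_k$ and passing to the supremum over $t \in T \setminus A_m$ gives the claim. I expect the only delicate point to be the measurability of the sets $E_n^k$, which rests on the Bochner-type notion of measurability in Definition \ref{def31} (ensuring that norms of differences are genuinely $\Sigma$-measurable); once this is secured, the rest is the standard Egoroff scheme, with the finiteness of $\mu$ entering solely through continuity from above.
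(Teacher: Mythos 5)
Your proposal is correct and follows essentially the same route as the paper's proof: both run the standard Egoroff scheme, the only difference being that you work with the exceptional sets $E_n^k$ and continuity of $\mu$ from above, while the paper works with the complementary good sets $B_{p,k}$ and continuity from below, which is just the dual bookkeeping of the same argument. Your indexing over pairs $p,q \geq n$ and the explicit measurability remark are, if anything, slightly cleaner than the paper's presentation.
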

\begin{proof}
Fix $ m > 0$ and $ k \in \mathbb{N} \setminus \{0\}.$ Define
$$
A_{i,j,k} = \{ t \in T: \| f_i(t) - f_j(t)\| < 1/k\} \hbox{ and }  B_{p,k} = \bigcap_{i+j \geq p} A_{i,j,k}. 
$$
Notice that for any $ p \in \mathbb{N}, $ $B_{p,k} \subset B_{p+1,k}.$ Moreover, 
$$
\bigcup_{p=1}^{\infty} B_{p,k} = T.
$$
Indeed, if $t \in T, $ then there exists $ p_o \in \mathbb{N}$ such that  for $ i, j \geq p_o, $ $ \|f_i(t) - f_j(t)\| < 1/k .$ Hence, $ t \in B_{p_o,k}.$ Since $ \mu(T) < \infty, $ for any $ k \in \mathbb{N} \setminus \{0\}$ there exists
$p(k)$ such that $ \mu (T \setminus B_{p(k),k}) < \frac{1}{2^k}.$ 
Fix $k_o \in \mathbb{N}$ such that $ \sum_{j=k_o}^{\infty} \mu(T \setminus B_{p(j),j}) < m$ and put
$$
A_m = \bigcup_{j=k_o}^{\infty} (T \setminus B_{p(j),j})
$$
It is clear that $ \mu(A_m) < m.$ Moreover $ T \setminus A_m=  \bigcap_{j=k_o}^{\infty} B_{p(j),j}.$ 
Now fix $ \epsilon > 0$ and $ j_o \in \mathbb{N}, $ $j_o \geq k_o$  such that $ \frac{1}{j_o} < \epsilon.$ Observe that if $ t \in T \setminus A_m$ then $ t \in B_{p(j_o),j_o}.$ Hence, for any $i,j \geq p(j_o)$
$$
sup\{ \|f_i(t) - f_j(t) \| : t \in T \setminus A_m \} < \frac{1}{j_o} < \epsilon, 
$$ 
which proves our claim.
\end{proof}
Reasoning in the same way as in Lemma \ref{lem31} we can prove
\begin{lem}
\label{lem32}
Let $ (T, \Sigma, \mu) $ be a measure space, $ 0<\mu(T) < \infty$ and let $ W$ be a Banach space. Let $\{f_n\} \subset L_o(T)$ and $ f \in L_o(T).$ assume that $ f_n(t) \rightarrow f(t) $ $\mu$-almost everywhere. 
Then, for any $ m >0 $ there exists $ A_m \in \Sigma, $ $ \mu(A_m) < m $ such that for any $ \epsilon >0$ there exists $ n_o \in \mathbb{N}$ such that
$$
sup \{ \|f_n(t) - f(t)\|: t \in T \setminus A_m\} \leq \epsilon
$$
provided $ n \geq n_o.$ 
\end{lem}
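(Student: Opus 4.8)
My plan is to mirror the proof of Lemma \ref{lem31} almost verbatim, simply replacing the two-index Cauchy sets $A_{i,j,k}$ by one-index sets that measure closeness to the limit $f$. First I would fix $m > 0$ and $k \in \mathbb{N} \setminus \{0\}$ and set
$$
A_{i,k} = \{ t \in T : \|f_i(t) - f(t)\| < 1/k \}, \qquad B_{p,k} = \bigcap_{i \geq p} A_{i,k}.
$$
As in the previous lemma, I would check that $B_{p,k} \subset B_{p+1,k}$ and that $\bigcup_{p=1}^{\infty} B_{p,k}$ exhausts $T$ up to a set of measure zero: for every $t$ at which $f_n(t) \to f(t)$ there is some $p_0$ with $\|f_i(t) - f(t)\| < 1/k$ for all $i \geq p_0$, whence $t \in B_{p_0,k}$.

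Next, exactly as before, since $\mu(T) < \infty$ I would use continuity of the measure from below to pick $p(k)$ with $\mu(T \setminus B_{p(k),k}) < 1/2^k$, then choose $k_0 \in \mathbb{N}$ with $\sum_{j=k_0}^{\infty} \mu(T \setminus B_{p(j),j}) < m$, and define
$$
A_m = \bigcup_{j=k_0}^{\infty} (T \setminus B_{p(j),j}),
$$
so that $\mu(A_m) < m$ and $T \setminus A_m = \bigcap_{j=k_0}^{\infty} B_{p(j),j}$. To finish, given $\epsilon > 0$ I would pick $j_0 \geq k_0$ with $1/j_0 < \epsilon$; then any $t \in T \setminus A_m$ lies in $B_{p(j_0),j_0}$, so $\|f_n(t) - f(t)\| < 1/j_0 < \epsilon$ for all $n \geq p(j_0)$, and $n_0 = p(j_0)$ delivers the claimed uniform bound.

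Since this is a direct transcription of an argument already carried out in detail, I do not anticipate any genuine obstacle. The only point requiring a word of care is the null set on which the pointwise convergence hypothesis may fail: this set can simply be absorbed into $A_m$ (it has measure zero, so it does not disturb $\mu(A_m) < m$), after which the uniform estimate over $T \setminus A_m$ holds at every point of that complement.
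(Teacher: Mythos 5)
Your proposal is correct and is exactly what the paper intends: the paper gives no separate proof of this lemma, stating only that it follows ``reasoning in the same way as in Lemma \ref{lem31}'', and your transcription (replacing the two-index sets $A_{i,j,k}$ by the one-index sets $A_{i,k}$ measuring closeness to $f$) is that argument carried out faithfully. Your explicit remark about absorbing the null set of non-convergence into $A_m$ is a point the paper itself glosses over, so if anything your version is slightly more careful.
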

\begin{thm}
\label{thm31}
Let $ X \subset L_o(T)$ be an $F$-admissible $ F$-space. Assume that $ \mu(T) < \infty.$ Let for $ M >0,$ 
$$
W^M = \{ s \in S_F: sup \{\|s(t) \|: t \in T \} \leq M \}.
$$
Let $ K = \{ T_1,...,T_k\}$ be an arbitrary partition of $T$ and 
$$ 
S^K = \{ s \in S_F: s = \sum_{j=1}^k w_i \chi(T_i): w_i \in W \}.
$$ 
Define $ P_K: S \rightarrow  S^K$ by 
$$
P_K(s) = \sum_{i=1}^k z_i(s) \chi(T_i),
$$
where for $s = \sum_{j=1}^l w_j \chi(S_j), $ and $i=1,...,k,$ 
$$
z_i(s) = \frac{\sum_{j=1}^l w_j \chi(S_j \cap T_i)\mu(S_j \cap T_i)}{\mu(T_i)}.
$$
Assume that $ \{ s_n \} \subset W^M $ satisfies the Cauchy condition with respect to the convergence $ \mu$-almost everywhere.
The for any $ \epsilon > 0$ there exists $ n_o \in \mathbb{N}$ such that for $ n,m \geq n_o$ and any partition $K$ 
$$
|P_K(s_n) - P_K(s_m)|_F \leq \epsilon.
$$
\end{thm}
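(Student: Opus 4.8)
The plan is to exploit the linearity of $P_K$ together with the fact that its coefficients are genuine averages, and then to reduce the whole estimate to the two continuity conditions (d) and (e) of Definition~\ref{def32} via a domination argument. First I would note that each $P_K$ is linear on $S_F$, so that $P_K(s_n)-P_K(s_m)=P_K(g)$ where $g:=s_n-s_m$ is again a simple function with $\|g(t)\|\le 2M$ for every $t$. Discarding the null blocks of $K$ (on which $P_K$ contributes the zero element of $L_o(T)$), each coefficient can be written as the average $z_i(g)=\frac{1}{\mu(T_i)}\int_{T_i} g\,d\mu=\frac{1}{\mu(T_i)}\sum_j w_j\,\mu(S_j\cap T_i)$, whence the triangle inequality for this finite $W$-valued sum yields the pointwise bound $\|z_i(g)\|\le \frac{1}{\mu(T_i)}\int_{T_i}\|g(t)\|\,d\mu(t)\le 2M$ on every block.

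The decisive step is to apply the Egorov-type Lemma~\ref{lem31} to the sequence $\{s_n\}$: for a prescribed small $\delta>0$ it produces a set $A=A_\delta$ with $\mu(A)<\delta$ off which $\{s_n\}$ is uniformly Cauchy, and crucially $A_\delta$ depends on the sequence but \emph{not} on $K$. Splitting each block as $T_i=(T_i\setminus A)\cup(T_i\cap A)$ and using $\|g(t)\|\le \eta'/2$ on $T\setminus A$ (valid once $n,m$ are large) together with $\|g\|\le 2M$ on $A$, I obtain $\|z_i(g)\|\le \frac{\eta'}{2}+2M\,\frac{\mu(T_i\cap A)}{\mu(T_i)}$. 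Consequently, any block on which the average exceeds $\eta'$ must satisfy $\mu(T_i)<\frac{4M}{\eta'}\,\mu(T_i\cap A)$, so the union $L$ of all such ``large-average'' blocks obeys $\mu(L)<\frac{4M}{\eta'}\,\mu(A)<\frac{4M}{\eta'}\,\delta$. Establishing this partition-free control of $\mu(L)$ is the main obstacle, and it is exactly what delivers the uniformity over all partitions $K$.

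Finally I would decompose $P_K(g)=P_K(g)\chi(T\setminus L)+P_K(g)\chi(L)$. On $T\setminus L$ the values have norm $\le\eta'$, so by monotonicity (condition (c)) the first summand is dominated in $F$-norm by $\eta'e\,\chi(T)$ for a fixed unit vector $e\in W$; by condition (d) applied with $A=T$ (using $\mu(T)<\infty$) this is $\le\epsilon/2$ once $\eta'$ is small. On $L$ the values have norm $\le 2M$, so the second summand is dominated by $2Me\,\chi(L)$, and by condition (e) there is a threshold $\rho>0$ making this $\le\epsilon/2$ whenever $\mu(L)<\rho$. Choosing the constants in the order $\eta'\mapsto\rho\mapsto\delta:=\rho\eta'/(4M)$ (all depending only on $\epsilon$ and $M$), and then invoking Lemma~\ref{lem31} to produce $A_\delta$ and the corresponding $n_0$, the triangle inequality (iii) gives $|P_K(g)|_F\le \epsilon/2+\epsilon/2=\epsilon$ for all $n,m\ge n_0$ and every partition $K$, which is the assertion.
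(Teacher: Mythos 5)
Your proof is correct, and at the decisive step it takes a genuinely different---and in fact sounder---route than the paper. Both arguments start identically: Lemma \ref{lem31} produces a set $A$ of prescribed small measure, independent of the partition, off which $\{s_n\}$ is uniformly Cauchy; both observe that block averages of functions bounded by $2M$ are again bounded by $2M$; and both finish with the domination condition (c) of Definition \ref{def32}, conditions (d) and (e), and the triangle inequality. The difference lies in how $P_K(s_n-s_m)$ is decomposed. The paper splits it along the Egorov set, writing $P_K(s_n-s_m)=P_K(s_n-s_m)\chi(T\setminus A_{p_o})+P_K(s_n-s_m)\chi(A_{p_o})$, and then asserts that $\sup\{\|P_K(s_n-s_m)(t)\|:t\in T\setminus A_{p_o}\}\le\|z\|/k_o$ for large $n,m$; this assertion is problematic, since a block $T_i$ that is mostly contained in $A_{p_o}$ yet meets its complement transports its average---which can be of size comparable to $2M$, because on $T_i\cap A_{p_o}$ one only has the bound $2M$---to points of $T\setminus A_{p_o}$, and the paper's closing observation that averaging does not increase $\sup_{T}\|s_n-s_m\|$ does not cure this, as that supremum over all of $T$ need not be small. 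You avoid the problem by splitting along block boundaries instead: $L$ is the union of the blocks on which the average exceeds $\eta'$, the piece on $T\setminus L$ is small in sup norm by the very definition of $L$, and the piece on $L$ is controlled by your Chebyshev-type estimate $\mu(L)\le(4M/\eta')\mu(A)<\rho$, which is uniform over all partitions $K$. That partition-free bound on $\mu(L)$ is exactly the ingredient the paper's decomposition lacks, so your argument not only proves the theorem but in effect repairs a gap in the published proof. Two small remarks: condition (e) is formulated for sequences, so the threshold $\rho$ should formally be extracted by contradiction (otherwise there would be sets $B_k$ with $\mu(B_k)\to 0$ but $|2Me\chi(B_k)|_F\not\to 0$); and discarding null blocks, as you do, is consistent with the paper's convention that partitions consist of sets of positive measure.
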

\begin{proof}
Fix $ \epsilon > 0.$ By Lemma \ref{lem31} for any $ p \in \mathbb{N} \setminus \{0\},$ there exists a sequence $\{ A_p\} \subset T $ such that $\mu(A_p) \rightarrow 0,$ and $\{ s_n\}$ satisfies the Cauchy condition with respect to the uniform convergence on $ T \setminus A_p.$ Fix a partition $K.$ Observe that for any $ p \in \mathbb{N} \setminus \{0\},$
$$
|P_K(s_n)- P_K(s_m)|_F = |P_K(s_n- s_m)\chi(T\setminus A_p) + P_K(s_n- s_m)\chi(A_p) |_F  
$$
$$
\leq |P_K(s_n- s_m)\chi(T\setminus A_p)|_F + |P_K(s_n- s_m)\chi(A_p) |_F. 
$$ 
Fix $ w \in W $ such that $ \|w \|= 2M.$ By $F$-admissibility of $X$ there exists $ p_o \in \mathbb{N} \setminus \{0\},$ such that $|w\chi(A_{p_o}) |_F \leq \epsilon/2.$  
Since for any $ n \in \mathbb{N}, $ $ s_n \in W^M, $ by defintion of $ P_K,$ $ P_K(s_n) \in W^M,$ too. Hence, for any $ t \in T, $ $n,m \in \mathbb{N}$ 
$$
\|P_K(s_n-s_m)\chi(A_{p_o}(t))\| \leq \|w \chi(A_{p_o})(t)\|.
$$
By $F$-admissibility of $X,$ for any $ n,m \in \mathbb{N}$ 
$$
|P_K(s_n-s_m)\chi(A_{p_o})|_F \leq |w \chi(A_{p_o})|_F < \epsilon/2.
$$
Now fix $ z \in W \setminus \{0\}.$ By $F$-admissibility of $X,$ there exists $k_o \in \mathbb{N}$ such that $ |\frac{z}{k_o}\chi(T)|_F < \epsilon/2.$
By defintion of $ A_{p_o}$ and Lemma \ref{lem31}, there exists $ n_o \in \mathbb{N}$ such that $ n,m \geq n_o $
$$ 
\sup \{ \|P_K(s_n-s_m)(t)\|:t \in T \setminus A_{p_o}\} \leq \frac{\|z\|}{k_o}.
$$
By $F$-admissibility of $X$ for $ n,m \geq n_o,$
$$
|P_K(s_n- s_m)\chi(T\setminus A_{p_o})|_F \leq |\frac{z}{k_o}\chi(T\setminus A_{p_o})|_F \leq |\frac{z}{k_o}\chi(T)|_F < \epsilon/2.
$$
Observe that for any partition $K$ and $ n, m \in \mathbb{N},$  
$$ 
sup \{\|P_K(s_n-s_m)(t)\|: t \in T \} \leq sup \{\|s_n-s_m\|:t \in T \}.
$$
Hence, the choice of $n_o$ is independent of $K.$ The proof is complete. 
\end{proof}
\begin{lem}
\label{lem33}
Let $ X \subset L_o(T)$ be an $F$-admissible $ F$-space. Assume that $ \mu(T) < \infty.$ Let $f \in X$ and let $sup\{ \|f(t)\|: t \in T\} =M < \infty.$
Then, there exists a sequence $ \{ w_n\} \subset W^{4M}$ such that $ w_n \rightarrow f$ $\mu$-almost everywhere. Moreover, $|w_n -f|_F \rightarrow 0.$ 
\end{lem}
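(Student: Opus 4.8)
The plan is to build the approximating sequence by truncating simple functions and then to obtain the $F$-norm convergence through an Egorov-type splitting (Lemma \ref{lem32}), rather than through condition (f) of $F$-admissibility, because the truncated approximants will not be pointwise dominated in norm by $f$. We may assume $M>0$ and $W\neq\{0\}$, the degenerate cases forcing $f=0$ and being trivial. Since $f\in X\subset L_o(T)$, Definition \ref{def31} supplies simple functions $s_n$ with $s_n(t)\to f(t)$ $\mu$-a.e.; these need not be bounded, so I would compose with the radial retraction $r:W\to W$ onto the ball of radius $2M$, namely $r(w)=w$ for $\|w\|\le 2M$ and $r(w)=\frac{2M}{\|w\|}w$ otherwise. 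As $r$ takes finitely many values on the finite range of $s_n$, each $w_n:=r\circ s_n$ is again simple, and $\mu(\operatorname{supp}(w_n))\le\mu(T)<\infty$ gives $w_n\in S_F$ with $\sup_t\|w_n(t)\|\le 2M$, so $w_n\in W^{2M}\subset W^{4M}$. Moreover $r$ is continuous and equals the identity on $\{w:\|w\|\le 2M\}$, which contains the range of $f$ (as $\|f(t)\|\le M<2M$); hence $w_n(t)=r(s_n(t))\to r(f(t))=f(t)$ $\mu$-a.e. Finally, by (b) $w_n\in X$, and since $X$ is linear with $f\in X$, we have $w_n-f\in X$, so $|w_n-f|_F$ is defined.

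For the $F$-norm convergence, fix $\epsilon>0$, a unit vector $z\in W$, and set $w:=3Mz$. Applying Lemma \ref{lem32} to $w_n\to f$ a.e.: for each $m>0$ there is $A_m\in\Sigma$ with $\mu(A_m)<m$ on whose complement $w_n\to f$ uniformly. On $A_m$ I would use the crude bound $\|(w_n-f)(t)\|\le\|w_n(t)\|+\|f(t)\|\le 3M=\|w\|$, so that $\|(w_n-f)\chi(A_m)(t)\|\le\|w\chi(A_m)(t)\|$ pointwise and, by (c), $|(w_n-f)\chi(A_m)|_F\le|w\chi(A_m)|_F$. Since $\mu(A_m)<m\to 0$ as $m\to 0^+$, condition (e) gives $|w\chi(A_m)|_F\to 0$, so I may freeze one $m$ (hence $A:=A_m$) with $|w\chi(A)|_F<\epsilon/2$, a bound independent of $n$.

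With this fixed $A$, put $\delta_n:=\sup\{\|w_n(t)-f(t)\|:t\in T\setminus A\}$; by Lemma \ref{lem32}, $\delta_n\to 0$. Then $\|(w_n-f)\chi(T\setminus A)(t)\|\le\delta_n=\|\delta_n z\,\chi(T)(t)\|$ for every $t\in T$, so by (c), $|(w_n-f)\chi(T\setminus A)|_F\le|\delta_n z\,\chi(T)|_F$. Because $\|\delta_n z\|=\delta_n\to 0$ and $\mu(T)<\infty$, condition (d) yields $|\delta_n z\,\chi(T)|_F\to 0$; choose $n_0$ so this is $<\epsilon/2$ for $n\ge n_0$. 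Combining the two pieces by the triangle inequality (iii),
$$
|w_n-f|_F \le |(w_n-f)\chi(T\setminus A)|_F + |(w_n-f)\chi(A)|_F < \epsilon \qquad (n\ge n_0),
$$
which gives $|w_n-f|_F\to 0$.

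I expect the main obstacle to be exactly the reason condition (f) cannot be used directly: the truncated functions $w_n$ are uniformly bounded by $2M$ but are not pointwise dominated in norm by $f$, so the a.e.-convergence tool (f) is unavailable. The device that circumvents this is the two-set Egorov splitting, whose delicate point is the order of the quantifiers — the small-measure set $A$ must be chosen first, using (e) to make the $A$-part small \emph{uniformly in} $n$, and only afterwards should one let $n\to\infty$ on $T\setminus A$, using the uniform convergence together with (d). Once this order is respected, the estimate is routine.
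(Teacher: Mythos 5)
Your proof is correct and follows essentially the same route as the paper: approximate $f$ $\mu$-a.e.\ by simple functions from $S_F$, truncate them pointwise to obtain a uniformly bounded sequence in $W^{4M}$, and then get $|w_n-f|_F\rightarrow 0$ via the Egorov-type splitting of Lemma \ref{lem32} together with conditions (c), (d), (e) of Definition \ref{def32} (the paper likewise avoids condition (f) here, invoking ``reasoning as in Theorem \ref{thm31}''). The only cosmetic difference is the truncation device: you compose with the radial retraction onto the $2M$-ball and use its continuity plus the fact that it fixes the range of $f$, whereas the paper truncates only the values of norm exceeding $4M$ and verifies directly that this does not increase the pointwise error.
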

\begin{proof}
Since $ f \in L_o(T), $ there esists a sequence $ \{ s_n\} \subset S_F $ such that $s_n \rightarrow f$ $\mu$-almost everywhere.
Let $ s_n = \sum_{j=1}^{k_n} s_{n,j}\chi_{A_{n,j}}.$ 
Set $ w_{n,j} = s_{n,j} $ if $ \|s_{n,j}\| \leq 4M$ and $ w_{n,j} = \frac{2M s_{n,j}}{\|s_{n,j}\|} $ in the opposite case.
Define
$$
w_n = \sum_{j=1}^{k_n} w_{n,j}\chi_{A_{n,j}}.
$$
Let $ t \in T.$ Then, there exists exactly one $j \in \{ 1,...,k_n\}$ such that $ t \in A_{n,j}.$ If $ \|s_{n,j}\| > 4M,$ then 
$$
\|(s_{n,j} - f)(t)\| - \|(w_{n,j} - f)(t)\|\geq \|s_{n,j}(t)\| - \|f(t)\| - (\|w_{n,j}(t)\|+\|f(t)\|) 
$$
$$
\geq \|s_{n,j}(t)\| - \|w_{n,j}(t)\| -2\|f(t)\| > 0.
$$
Hence, $w_n \rightarrow f$ $ \mu$-almost everywhere.
Now we show that $ |w_n -f|_F \rightarrow 0.$
Applying Lemma \ref{lem32} and reasoning as in Theorem \ref{thm31}, we get that for any $\epsilon >0$ and $ k \in \mathbb{N} \setminus \{0\}$ there exists $ A_k \subset T,$ 
and $ n_o \in \mathbb{N}$ such that for $ n \geq n_o$
$$
|(f-w_n)\chi(A_k)|_F \leq \epsilon/2 \hbox{ and } |(f-w_n)\chi(T \setminus A_k)|_F \leq \epsilon/2.
$$
Hence, $ |f-w_n|_F \rightarrow 0.$ 
\end{proof}
\begin{lem}
\label{lem33a}
Let $ X \subset L_o(T)$ be an $F$-admissible $ F$-space. Assume that $ \mu(T) < \infty.$ Assume that $ \{ f_n \} \subset X,$ $f \in X$  and $ |f_n - f|_F \rightarrow 0.$
Then, there exists a subsequence $ \{ n_k\}$ such that $f_{n_k} $ converges to $f$  $\mu$-almost everywhere. 
\end{lem}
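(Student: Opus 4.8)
The plan is to argue in two stages, following the classical Riesz theorem that convergence in measure yields almost everywhere convergence along a subsequence. First I would show that convergence in the $F$-norm forces convergence in measure, i.e. that $\mu(\{t:\|f_n(t)-f(t)\|\geq\sigma\})\to 0$ for every $\sigma>0$; then I would extract an almost everywhere convergent subsequence by a Borel--Cantelli argument. Everything beyond the first stage is standard, so the work concentrates on translating the analytic hypothesis $|f_n-f|_F\to 0$ into a genuinely measure-theoretic statement.

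For the first stage, fix $\sigma>0$ and set $E_n=\{t\in T:\|f_n(t)-f(t)\|\geq\sigma\}$. Since $\mu(E_n)\leq\mu(T)<\infty$, any simple function supported on $E_n$ lies in $S_F\subset X$. Choose $w\in W$ with $\|w\|=\sigma$. Then $\|w\chi(E_n)(t)\|=\sigma\leq\|(f_n-f)(t)\|$ for $t\in E_n$ and $\|w\chi(E_n)(t)\|=0$ otherwise, so $\|w\chi(E_n)(t)\|\leq\|(f_n-f)(t)\|$ for all $t$. Applying the solidity condition (c) of Definition \ref{def32} with $g=f_n-f\in X$ gives $|w\chi(E_n)|_F\leq|f_n-f|_F\to 0$. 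Since $w\neq 0$, condition (e) then yields $\mu(E_n)\to 0$, which is precisely convergence in measure.

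For the second stage, I would use this convergence in measure to pick indices $n_1<n_2<\cdots$ with $\mu(F_k)<2^{-k}$, where $F_k=\{t:\|f_{n_k}(t)-f(t)\|\geq 1/k\}$. Then $\sum_k\mu(F_k)<\infty$, so $\mu(\limsup_k F_k)=0$; for every $t$ outside this null set we have $\|f_{n_k}(t)-f(t)\|<1/k$ for all large $k$, whence $f_{n_k}(t)\to f(t)$. This establishes almost everywhere convergence of $\{f_{n_k}\}$ to $f$.

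The only genuinely non-routine point is the first stage, and specifically the coordinated use of conditions (c) and (e): the whole argument hinges on producing, from the $F$-norm estimate alone, the measure estimate $\mu(E_n)\to 0$, which is exactly what the characterization in condition (e) is designed to supply once the comparison inequality $|w\chi(E_n)|_F\leq|f_n-f|_F$ has been secured through the solidity condition (c). The Borel--Cantelli extraction in the second stage is the classical Riesz argument and needs no structure beyond $\sigma$-additivity of $\mu$.
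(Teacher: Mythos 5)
Your proof is correct and follows essentially the same route as the paper: both arguments first deduce convergence in measure from $|f_n-f|_F\to 0$ by comparing $f_n-f$ with the simple function $w\chi(E_n)$ via conditions (c) and (e) of Definition \ref{def32}, and then extract an almost everywhere convergent subsequence by the Borel--Cantelli argument. The only cosmetic difference is that the paper establishes convergence in measure by contradiction, whereas you argue it directly, which is in fact slightly cleaner.
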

\begin{proof}
Assume that $|f_n-f|_F \rightarrow 0.$ Fisrt we show that $f_n $ converges to $ f$ in measure. 
Assume that this is not true. Passing to a convergent subsequence, if necessary, we can assume that there exist $ d>0 $ and $ e> 0$ such that 
$$
\mu(A_{d,n}) = \mu(\{ t \in T: \|f_n(t) - f(t)\| \geq d \}) > e
$$
for all $n\in\mathbb{N}$. Fix $ z \in W$ such that $\|z\|=d$ and let $ g_n = z \chi(A_{d,n}).$
Note that for any $ t \in T, $
$$
\|(f-f_n)(t)\| \geq \|(f-f_n)\chi(A_{d,n})(t)\| \geq \|g_n(t)\|.
$$
Since $X$ is an $F$-admissible $F$-space  $|g_n|_F$ does not converge to $0$ and consequently $ |f-f_n|_F $ does not converge to $0;$ a contradiction.
Now, select for any $ k \in \mathbb{N} \setminus \{0\} $ $n_k \in \mathbb{N}$ such that $\mu(A_{1/k,n_k}) \leq \frac{1}{2^k}.$ Let for any $ m \in \mathbb{N},$
$B_m = \bigcup_{k=m}^{\infty} A_{1/k,n_k}.$ Notice that
$$
\mu(B_m) \leq \sum_{k=m}^{\infty}\mu( A_{1/k,n_k}) \leq \frac{1}{2^{m-1}}.
$$
Let $B = \bigcap_{m=1}^{\infty} B_m.$ It is clear that $\mu(B) = lim_m \mu(B_m) =0.$ Let $ t \in T \setminus B.$
Then, there exists $m_o$ such that $ t \notin B_{m_o}.$ Hence, for any $ k \geq m_o $ $ t \notin A_{1/k, n_k}.$ 
Consequently for any $ k \geq k_o$ $ \|f(t) - f_n(t) \| < \frac{1}{k_o},$ , which shows our claim.
\end{proof}
\begin{lem}
\label{lem34}
Let $ X \subset L_o(T)$ be an $F$-admissible $ F$-space. Assume that $ \mu(T) < \infty.$ Fix $f \in X$ with $sup\{ \|f(t)\|: t \in T\} =M < \infty.$
Let $ \{ w_n\} \subset W^{4M}$ be so chosen that $ w_n \rightarrow f$ $\mu$-almost everywhere. Let $ K $ be a fixed partition of $T$ 
and let $ P_K $ be the operator from Theorem \ref{thm31}. Then, there exists
$$
lim_{n} P_k(w_n)=P_K(f) 
$$
and it is independent of the choice of $\{ w_n\}.$
\end{lem}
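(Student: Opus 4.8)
The plan is to exhibit the limit explicitly and thereby settle existence and independence of the approximating sequence in one stroke. For each $i=1,\dots,k$ (assuming, as implicit in the definition of $z_i$, that $\mu(T_i)>0$) set
$$
z_i(f)=\frac{1}{\mu(T_i)}\int_{T_i} f\,d\mu,
$$
a Bochner integral that makes sense because $f$ is strongly measurable (being an a.e.\ limit of simple functions), $\|f(t)\|\le M$, and $\mu(T_i)\le\mu(T)<\infty$. Define $P_K(f)=\sum_{i=1}^k z_i(f)\chi(T_i)$. Since $\|z_i(f)\|\le M$, the function $P_K(f)$ lies in $W^M\subset S_F\subset X$, and for a simple $s=\sum_j w_j\chi(S_j)$ one has $z_i(s)=\frac{1}{\mu(T_i)}\int_{T_i}s\,d\mu$, so this is consistent with the definition in Theorem \ref{thm31}. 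The goal is to prove $|P_K(w_n)-P_K(f)|_F\to 0$; since the right-hand object depends only on $f$ and $K$, independence of the choice of $\{w_n\}$ is then automatic.

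The analytic core is the convergence $z_i(w_n)\to z_i(f)$ in $W$. Here I would invoke the vector-valued dominated convergence theorem: $w_n(t)\to f(t)$ for $\mu$-a.e.\ $t$, and $\|w_n(t)-f(t)\|\le \|w_n(t)\|+\|f(t)\|\le 4M+M=5M$, with the dominating constant $5M$ integrable over $T_i$ because $\mu(T_i)<\infty$. Hence $\int_{T_i}\|w_n-f\|\,d\mu\to 0$, and therefore
$$
\|z_i(w_n)-z_i(f)\|\le \frac{1}{\mu(T_i)}\int_{T_i}\|w_n-f\|\,d\mu\longrightarrow 0.
$$

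Finally I would write $P_K(w_n)-P_K(f)=\sum_{i=1}^k\bigl(z_i(w_n)-z_i(f)\bigr)\chi(T_i)$ and combine the triangle inequality (iii) with admissibility condition (d): since $\|z_i(w_n)-z_i(f)\|\to 0$ and $\mu(T_i)<\infty$, each term satisfies $|(z_i(w_n)-z_i(f))\chi(T_i)|_F\to 0$, and the sum is finite, whence $|P_K(w_n)-P_K(f)|_F\to 0$. This shows the limit exists and equals $P_K(f)$, and independence follows at once. Existence alone could also be extracted from Theorem \ref{thm31}: $w_n\to f$ a.e.\ makes $\{w_n\}\subset W^{4M}$ Cauchy for a.e.-convergence, so $\{P_K(w_n)\}$ is $F$-Cauchy and converges by completeness of $X$; but this route still needs the explicit value $P_K(f)$ to identify the limit and to rule out dependence on $\{w_n\}$ (e.g.\ by interleaving two admissible sequences). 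I expect the only genuinely delicate point to be the passage to the limit inside the vector-valued integral, i.e.\ justifying dominated convergence for $\int_{T_i} w_n\,d\mu$ in the Banach space $W$; once that is in hand, everything reduces to a finite sum controlled termwise by (iii) and (d).
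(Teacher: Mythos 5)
Your proof is correct, but it takes a genuinely different route from the paper's. The paper argues abstractly: since $w_n \rightarrow f$ $\mu$-a.e., the sequence $\{w_n\}$ satisfies the Cauchy condition for a.e.-convergence, so by Theorem \ref{thm31} the sequence $\{P_K(w_n)\}$ is Cauchy in $(X,|\cdot|_F)$; completeness of the Fr\'echet space $X$ then yields existence of the limit, and independence of the choice of $\{w_n\}$ is obtained by the interleaving trick (running the same argument on $s_{2n}=w_n$, $s_{2n+1}=z_n$). Note that the paper never identifies the limit by a formula: in effect, $P_K(f)$ is \emph{defined} to be this limit. You instead give the limit a concrete value, $z_i(f)=\frac{1}{\mu(T_i)}\int_{T_i}f\,d\mu$ as a Bochner integral (legitimate, since $f$ is strongly measurable, bounded by $M$, and $\mu(T_i)<\infty$), check consistency with the paper's formula for $z_i$ on simple functions (correctly reading past the stray $\chi(S_j\cap T_i)$ factors there, which must be typographical since $P_K$ is declared to map into $S^K$), and then prove $|P_K(w_n)-P_K(f)|_F\rightarrow 0$ directly from scalar dominated convergence, the triangle inequality (iii), and admissibility condition (d); independence of $\{w_n\}$ is then automatic because the limit depends only on $f$ and $K$, and uniqueness of limits holds since $|\cdot|_F$ induces a metric. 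Your route is self-contained -- it uses neither Theorem \ref{thm31} nor completeness of $X$ -- and it has the added benefit of exhibiting $P_K$ on bounded elements of $X$ as the conditional-expectation (averaging) operator, which clarifies what $P_K(f)$ actually means when it is invoked later in Lemma \ref{lem35} and Theorem \ref{thm33}. What the paper's route buys is economy of means: it needs no integration theory beyond its own definitions and reuses machinery it has to build anyway, at the cost of leaving the limit implicit. Both proofs are valid.
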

\begin{proof}
Let $ \{w_n\} \subset W^{4M}$ converges to $f$ $\mu$-almost everywhere. (By Lemma \ref{lem33} such a sequence exists.
Then, $ \{w_n\}$ satisfies the Cauchy condition with respect to the $\mu$-almost everywhere convergence.
By Theorem \ref{thm31}, the sequence $ \{ P_K(w_n)\}$ satisfies the Cauchy condition with respect to the convergence in $X.$ Since $X$ is complete, 
there exists $ lim_n P_K(w_n).$ If $ \{z_n\} \subset W^{4M}$ is the other sequence  converging to $f$ $\mu$-almost everywhere, then considerning 
a sequence $s_{2n} = w_n$ and $s_{2n+1} = z_n,$ we get that the obtained limit is independent of the choice of $\{w_n\} \subset W^{4M}.$
\end{proof}
\begin{lem}
\label{lem35}
Let $ X \subset L_o(T)$ be an $F$-admissible $ F$-space. Assume that $ 0 < \mu(T) < \infty.$ Let $ \{ f_n\} \subset X$ and $ f \in X.$
Assume that there exists $M >0$ such that $ sup\{  \|f_n(t)\|: t \in T, n \in \mathbb{N} \} \leq M $ and $ sup\{\|f(t)\|: t \in T \} \leq M.$
Let $K$ be a fixed partition of $T.$ If  $ |f_n -f|_F \rightarrow 0,$ then $ |P_K(f_n)-P_K(f)|_F \rightarrow 0.$
\end{lem}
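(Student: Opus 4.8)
The plan is to identify $P_K$ with a conditional average, reduce the $F$-norm statement to the norm convergence of finitely many Bochner integrals in $W$, and then recover that convergence from $|f_n-f|_F\to 0$ via the subsequence principle. First I would record the explicit form of the operator extended in Lemma~\ref{lem34}. For a simple function $s=\sum_j w_j\chi(S_j)$ one checks directly that $z_i(s)=\frac{1}{\mu(T_i)}\int_{T_i}s\,d\mu$, so $P_K(s)=\sum_{i=1}^k\bigl(\frac{1}{\mu(T_i)}\int_{T_i}s\,d\mu\bigr)\chi(T_i)$. For a bounded $g\in X$ with $\sup_t\|g(t)\|\le M$, choose by Lemma~\ref{lem33} simple functions $w_n\in W^{4M}$ with $w_n\to g$ $\mu$-almost everywhere; since $\mu(T)<\infty$ and $\|w_n(t)\|\le 4M$, the bounded convergence theorem for the Bochner integral gives $\int_{T_i}w_n\,d\mu\to\int_{T_i}g\,d\mu$ in $W$, whence $z_i(w_n)\to\frac{1}{\mu(T_i)}\int_{T_i}g\,d\mu$. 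By condition (d) of Definition~\ref{def32} this forces $P_K(w_n)\to\sum_i\bigl(\frac{1}{\mu(T_i)}\int_{T_i}g\,d\mu\bigr)\chi(T_i)$ in $F$-norm, and the uniqueness in Lemma~\ref{lem34} identifies $P_K(g)$ with that average. In particular $P_K(f_n)$ and $P_K(f)$ are well defined, with $z_i(h)=\frac{1}{\mu(T_i)}\int_{T_i}h\,d\mu$.

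With this representation in hand, the triangle inequality (iii) of Definition~\ref{def31a} gives $|P_K(f_n)-P_K(f)|_F\le\sum_{i=1}^k\bigl|(z_i(f_n)-z_i(f))\chi(T_i)\bigr|_F$. Since the partition $K$ has only finitely many blocks, condition (d) of Definition~\ref{def32} reduces the whole claim to showing, for each fixed $i$, that $\|z_i(f_n)-z_i(f)\|\to 0$ in $W$. Because $\|z_i(f_n)-z_i(f)\|\le\frac{1}{\mu(T_i)}\int_{T_i}\|f_n-f\|\,d\mu$, it therefore suffices to prove the scalar statement $\int_{T_i}\|f_n-f\|\,d\mu\to 0$.

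To obtain this I would use the subsequence principle, since $|f_n-f|_F\to 0$ controls $\mu$-almost everywhere convergence only along subsequences. Suppose $\int_{T_i}\|f_n-f\|\,d\mu\not\to 0$ for some $i$, and pass to a subsequence along which these integrals stay bounded away from $0$. Along that subsequence one still has $|f_{n_j}-f|_F\to 0$, so Lemma~\ref{lem33a} yields a further subsequence with $f_{n_{j_l}}\to f$ $\mu$-almost everywhere. As $\|f_{n_{j_l}}(t)-f(t)\|\le 2M$ for all $t$ and $\mu(T)<\infty$, the bounded convergence theorem forces $\int_{T_i}\|f_{n_{j_l}}-f\|\,d\mu\to 0$, contradicting the choice of the subsequence. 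Hence $\int_{T_i}\|f_n-f\|\,d\mu\to 0$ for every $i$, and combining this with the reduction above completes the proof.

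I expect the main obstacle to be precisely this bridge between $F$-norm convergence and convergence of the averages: the $F$-norm carries no direct information about integrals, so the argument must route through almost-everywhere convergence, which is available only on subsequences (Lemma~\ref{lem33a}), and then recover full-sequence convergence by the subsequence principle together with the uniform bound $M$. A secondary technical point is justifying the integral form of the extended $P_K$, that is, verifying that the $F$-norm limit defining $P_K(g)$ in Lemma~\ref{lem34} coincides blockwise with the Bochner averages $\frac{1}{\mu(T_i)}\int_{T_i}g\,d\mu$.
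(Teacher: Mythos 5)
Your proposal is correct, but it takes a genuinely different route from the paper. The paper's proof stays entirely inside the abstract simple-function calculus: for each $n$ it uses Lemma \ref{lem33} and Lemma \ref{lem34} to pick $s_{k_n,n}\in W^{4M}$ with $|P_K(f_n)-P_K(s_{k_n,n})|_F\le 1/(2n)$ and $|f_n-s_{k_n,n}|_F<1/(2n)$, observes that then $|f-s_{k_n,n}|_F\to 0$, concludes ``by definition of $P_K$'' that $|P_K(f)-P_K(s_{k_n,n})|_F\to 0$, and finishes with the triangle inequality. You instead make the extended operator explicit, $P_K(g)=\sum_{i}\bigl(\frac{1}{\mu(T_i)}\int_{T_i}g\,d\mu\bigr)\chi(T_i)$, justified via Lemma \ref{lem33}, bounded convergence for Bochner integrals, condition d of Definition \ref{def32}, and the uniqueness in Lemma \ref{lem34}; the claim then reduces, again by condition d and the triangle inequality over the finitely many blocks of $K$, to the scalar statement $\int_{T_i}\|f_n-f\|\,d\mu\to 0$, which you obtain by the subsequence principle through Lemma \ref{lem33a} and the uniform bound $M$. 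Notably, the bridge you flag as the main obstacle is exactly the point the paper glosses over: Lemma \ref{lem34} transfers only $\mu$-almost everywhere convergence through $P_K$, not $F$-norm convergence, so the paper's step ``by definition of $P_K$'' tacitly requires the same Lemma \ref{lem33a} plus subsequence-principle argument that you carry out explicitly. Your route buys an explicit integral formula for the extended $P_K$ (never written down in the paper, and note that your reading of $z_i(s)$ as the average $\frac{1}{\mu(T_i)}\int_{T_i}s\,d\mu$ is the only one consistent with $P_K$ mapping into $S^K$; the factor $\chi(S_j\cap T_i)$ in the paper's formula is evidently a typo) together with a self-contained, checkable treatment of the critical step, at the cost of invoking Bochner-integration machinery; the paper's route avoids integrals entirely but leaves its key step more asserted than proved.
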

\begin{proof}
By definition of $ P_K, $ for any $ n \in \mathbb{N} $ there exists $ \{ s_{k,n}\} \subset W^{4M}$ such that $ | P_K(f_n) - P_K(s_{k,n})|_F \rightarrow_k 0.$
Hence, for any $ n \in \mathbb{N}$ there exists $ k_n \in \mathbb{N}$ such that                         
$$
|P_K(f_n) - P_K(s_{k_n,n})|_F \leq 1/(2n) \hbox{ and } |f_n - s_{k_n,n}|_F < 1/(2n). 
$$
Since $|f_n -f|_F \rightarrow, 0, $ $ |f - s_{k_{n},n}|_F \rightarrow_n 0.$ By definition of $ P_K,$
$$ 
|P_K(f) - P_K(s_{k_n,n})|_F \rightarrow_n 0.
$$
Hence, 
$$
0 \leq |P_K(f) - P_K(f_n)|_F \leq |P_K(f) - P_K(s_{k_n,n})|_F+ |P_K(f_n) - P_K(s_{k_n,n})|_F 
$$
$$
\leq 1/(2n)+ |P_K(f) - P_K(s_{k_n,n})|_F .
$$
Hence, $ |P_K(f_n) -P_K(f)|_F \rightarrow 0, $ as required.
\end{proof}
\begin{defn}
\label{def33}
Let $ 0< \mu(T) < \infty$ and let $ K =\{K_1,...K_n\} $ and $G= \{G_1,...,G_m\},$  denote two partitions of $T,$ 
$ K_i, G_j \in \Sigma,$ $ 0< \mu(K_i)$ for $i=1,...,n$ and $ 0 < \mu(G_j)  $ for $j=1,...,m.$
We say that $ F \leq  G$ if, for any $ i=1,...,n$,
$$
K_i = \sum_{j=1}^{m_i} G_{ij}
$$
with $ G_{ij} \in G $ for $ j=1,...,m_i$ and $1\leq m_i\leq m$.
\end{defn}
\begin{lem}
\label{lem36}
Let $ s \in W^M\setminus \{0\} $ and let $ 0< \mu(T) < \infty.$ Let $ K = \{ K_1,...,K_n\}$ be a partition associated with $s,$ i.e.
$$
s = \sum_{j=1}^n w_j \chi(K_j).
$$ 
If $G= \{G_1,...,G_k\} $ is a partition of $T$ and $ K \leq G$ then $ P_G(s) = s.$
\end{lem}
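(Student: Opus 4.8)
The plan is to use the fact that $K \leq G$ means $G$ subdivides $K$, so that each piece of $G$ lies inside a single piece of $K$ on which $s$ is constant; the averaging operator $P_G$ then simply returns that constant value, giving $P_G(s) = s$.

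First I would unwind the hypothesis $K \leq G$ from Definition \ref{def33}. Writing $K_j = \bigcup_{l=1}^{m_j} G_{jl}$ with $G_{jl} \in G$, the members occurring in these disjoint unions must exhaust $G$: since the $K_j$ cover $T$ and each is a union of whole members of $G$, a piece $G_i$ (of positive measure) cannot be split among several $K_j$, so it lies in exactly one piece $K_{j(i)}$. Because the $K_j$ are pairwise disjoint, this yields, for every fixed $i$,
$$
\mu(K_j \cap G_i) = \begin{cases} \mu(G_i), & j = j(i),\\ 0, & j \neq j(i). \end{cases}
$$

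Next I would evaluate the coefficients of $P_G(s)$. Applying the formula for $z_i$ from Theorem \ref{thm31} to $s = \sum_{j=1}^n w_j \chi(K_j)$ and the partition $G$, the relation above collapses the inner sum to its single nonzero term:
$$
z_i(s) = \frac{\sum_{j=1}^n w_j \mu(K_j \cap G_i)}{\mu(G_i)} = \frac{w_{j(i)} \mu(G_i)}{\mu(G_i)} = w_{j(i)},
$$
the division being legitimate since $\mu(G_i) > 0$ by Definition \ref{def33}. Therefore $P_G(s) = \sum_{i=1}^k w_{j(i)} \chi(G_i)$. Grouping the indices $i$ according to the value of $j(i)$ and using $K_j = \bigcup_{i : j(i) = j} G_i$, the part of this sum with $j(i) = j$ equals $w_j \chi(K_j)$; summing over $j$ gives $\sum_{j=1}^n w_j \chi(K_j) = s$, so $P_G(s) = s$.

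I do not anticipate a serious obstacle here: the assertion is essentially the reproducing property of a conditional-expectation-type averaging operator acting on a function already measurable with respect to the coarser partition. The only point requiring care is the combinatorial step showing that refinement forces the inclusion $G_i \subseteq K_{j(i)}$, so that each overlap measure $\mu(K_j \cap G_i)$ is either $0$ or $\mu(G_i)$; everything else is a direct substitution into the definition of $P_G$.
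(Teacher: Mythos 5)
Your proof is correct and follows essentially the same route as the paper: both arguments reduce $K \leq G$ to the observation that each $G_i$ lies in exactly one $K_{j(i)}$, compute $z_i(s) = w_{j(i)}$ from the definition of $P_G$, and then identify $\sum_i w_{j(i)}\chi(G_i)$ with $s$ (the paper does this pointwise, you do it by regrouping the sum — an immaterial difference). Your extra justification of why a member of $G$ cannot be split among several $K_j$'s is a small point of care the paper leaves implicit.
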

\begin{proof}
Notice that 
$$
P_G(s) = \sum_{i=1}^k z_i(s) \chi(G_i),
$$
where for $s = \sum_{j=1}^n w_j \chi(K_j), $ and $i=1,...,k,$ 
$$
z_i(s) = \frac{\sum_{j=1}^n w_j \chi(K_j \cap G_i)\mu(K_j \cap G_i)}{\mu(G_i)}.
$$ 
Since $K \leq G,$ there exists exactly one $j_i \in \{1,...,n\}$ such that $G_i \subset K_{j_i}.$
Hence, 
$$
z_i(s)= \frac{w_{j_i}\chi(G_i \cap K_{j_i})\mu(G_i\cap K_{j_i})}{\mu(G_i)} = w_{j_i}\chi(G_i).
$$
Let $ t \in T.$ Then, there exists exactly one $ i \in \{ 1,...,k\},$ such that $ t \in  G_i.$ Hence, $ s(t)= w_{j_i},$ which shows that $ P_G(s)=s.$ 
\end{proof}
\begin{lem}
\label{lem37}
Let $ 0< \mu(T) < \infty.$ Let for $ n \in \mathbb{N},$ $ K^n = \{ K_{1,n}...,K_{k,n}\}$ be a partition of $T.$ Then, there exists a sequence $ \{ S^n\}$ of partitions of $T$
such that $ S^i \leq S^j $ for $ i \leq j$ and for any $ n \in \mathbb{N},$ $ K^n \leq S^j$ for $ j \geq n.$ 
\end{lem}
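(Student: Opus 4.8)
The plan is to build $S^n$ as the common refinement of the first $n$ of the given partitions, namely $K^1,\dots,K^n$. To this end I first recall the refinement of a pair of partitions: for partitions $P=\{P_i\}_{i=1}^p$ and $Q=\{Q_j\}_{j=1}^q$ of $T$ I set
$$
P\vee Q=\{\,P_i\cap Q_j:\ 1\le i\le p,\ 1\le j\le q,\ \mu(P_i\cap Q_j)>0\,\}.
$$
Since $\bigcup_i P_i=T=\bigcup_j Q_j$, the sets $P_i\cap Q_j$ are pairwise disjoint and cover $T$, so after deleting those of measure zero the survivors still cover $T$ up to a $\mu$-null set and each has positive measure. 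Because every piece $P_i\cap Q_j$ of $P\vee Q$ lies in exactly one piece of $P$ and in exactly one piece of $Q$, grouping the pieces of $P\vee Q$ according to the $P_i$ that contains them shows $P\le P\vee Q$, and symmetrically $Q\le P\vee Q$.

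With this in hand I would define $S^n=K^1\vee K^2\vee\cdots\vee K^n$; the operation $\vee$ is associative, so the bracketing is immaterial and a typical piece of $S^n$ is a positive-measure set of the form $K_{i_1,1}\cap\cdots\cap K_{i_n,n}$. To get the increasing chain, I would use $S^{j}=S^{i}\vee K^{i+1}\vee\cdots\vee K^{j}$ for $i\le j$: each piece of $S^j$ sits inside a unique piece of $S^i$, so collecting the pieces of $S^j$ contained in a fixed piece of $S^i$ writes the latter as their disjoint union, i.e. $S^i\le S^j$. For the second property, fix $n$ and $j\ge n$; since $K^n$ is one of the factors in the join defining $S^j$, every piece of $S^j$ is contained in a unique piece of $K^n$, and gathering, for each fixed $K_{i,n}$, the pieces of $S^j$ inside it exhibits $K_{i,n}$ as a disjoint union of pieces of $S^j$. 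This is exactly $K^n\le S^j$.

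The only genuine difficulty is the positivity requirement in Definition \ref{def33}, which forbids pieces of measure zero: after discarding the null intersections the remaining pieces cover $T$ only modulo a $\mu$-null set, so the equalities $K_i=\sum_j G_{ij}$ hold only up to null sets. I expect this to be the main point to address, and I would handle it either by interpreting $\le$ modulo $\mu$-null sets---the natural convention in $L_o(T)$, where functions are identified up to a.e. equality---or, if exact partitions are insisted upon, by absorbing the leftover null set into one fixed positive-measure piece of each $S^n$; since this modifies the construction only on a null set, it disturbs neither the chain $S^i\le S^j$ nor the refinements $K^n\le S^j$. Finally I would note that every $S^n$ is a finite partition, being obtained from finitely many finite partitions, so the whole sequence stays inside the admissible class of partitions.
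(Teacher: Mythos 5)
Your proposal is correct and follows essentially the same route as the paper: the paper also builds $S^{n+1}$ inductively as the common refinement $\{S\cap K : S\in S^n,\ K\in K^{n+1}\}$, which coincides with your join $K^1\vee\cdots\vee K^n$. In fact you are more careful than the paper, which silently ignores the possibility of intersections of measure zero that your last paragraph handles explicitly.
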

\begin{proof}
Let $ S^1 = K^1. $ Put $ S^2 = \{ K_1 \cap K_2 : K_1 \in K^1, K_2 \in K^2\}.$ It is clear that $ S^2$ is a partition of $T$ and $ K^i \leq S^2$ for $ i=1,2.$
Now assume that we have constructed $ S^1,...,S^n.$ 
Define $S^{n+1} = \{ S \cap K: S \in S^n, K \in K^{n+1}\}.$ It is clear that $ S^{n+1} $ is a partition of $T,$ $ S^i \leq S^{n+1}$ for $ i =1,...,n$ and 
$K^i \leq S^{n+1}$ for $ i=1,...,n+1.$ By our construction, for any $ n \in \mathbb{N}$ and $ j \geq n,$ $ S^n \leq S^j.$ Also $ K^n \leq S^j,$ for $j \geq n,$
as required.  
\end{proof}
\begin{thm}
\label{thm33}
Let $ X \subset L_o(T)$ be an $F$-admissible $ F$-space. Assume that $ 0 < \mu(T) < \infty.$ Let $Z \subset X$ be a compact set.
Assume that there exists $M >0$ such that $sup\{\|f(t)\|: f \in Z, t \in T \} \leq M.$ Then, for any $ \epsilon > 0$ there exists $K$ a partition of $T$ such that
$$
sup\{ |P_K(f)-f|_F: f \in Z \} < \epsilon.
$$
\end{thm}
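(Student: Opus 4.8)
The plan is to reduce the statement to a finite problem via compactness and then to produce a single partition as a common refinement. Since $Z$ is compact it is totally bounded, so for a parameter $\delta>0$ fixed later I would first choose a finite $\delta$-net $f_1,\dots,f_N\in Z$; every $f\in Z$ then lies within $F$-distance $\delta$ of some $f_i$, and all $f_i$ satisfy $\sup_t\|f_i(t)\|\le M$. For each $i$, Lemma \ref{lem33} supplies a simple function $w_i\in W^{4M}$ with $|w_i-f_i|_F<\delta$; let $K^i$ be the partition associated with $w_i$, and let $K$ be a common refinement of $K^1,\dots,K^N$ (obtained by intersecting cells, cf. Lemma \ref{lem37}). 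Lemma \ref{lem36} then gives $P_K(w_i)=w_i$ for every $i$, so that $P_K$ exactly reproduces each simple approximant. The two estimates I must control are $|P_K(f_i)-f_i|_F$ on the net and $|P_K(f)-P_K(f_i)|_F$ off it, and both reduce to bounding $|P_K(h)|_F$ for $h$ of small $F$-norm and bounded supremum norm.

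The main obstacle is exactly this last bound, in a form \emph{uniform over all partitions}: for every $\epsilon_0>0$ and $M_0>0$ there is $\delta_0>0$ such that $|P_K(h)|_F<\epsilon_0$ whenever $K$ is a partition and $h\in X$ satisfies $\sup_t\|h(t)\|\le M_0$ and $|h|_F<\delta_0$. The difficulty is that a cell of $K$ may straddle a small set where $h$ is large, so $P_K$ can spread mass of size $M_0$ across all of $T$; one therefore cannot merely control $\sup_t\|P_K(h)(t)\|$ off a small set, and the naive splitting used in Theorem \ref{thm31} does not suffice. I would instead use that $P_K$ is the averaging (conditional-expectation-type) operator with explicit formula $P_K(h)|_{T_i}=\tfrac{1}{\mu(T_i)}\int_{T_i}h\,d\mu$, valid for bounded $h$ by passing to the limit in Lemma \ref{lem34} via dominated convergence; this makes $P_K$ simultaneously a contraction for the supremum norm and for the $L^1$-norm $\int_T\|\cdot\|\,d\mu$. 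Arguing by contradiction, take $h_n$ with $|h_n|_F\to0$ and partitions $K_n$ with $|P_{K_n}(h_n)|_F\ge\epsilon_0$; by Lemma \ref{lem33a} I may assume $h_n\to0$ $\mu$-a.e. Using Lemma \ref{lem32} (Egorov) I split $h_n=h_n\chi(T\setminus A_m)+h_n\chi(A_m)$ with $\mu(A_m)<m$ and uniform convergence off $A_m$. The first piece has supremum norm tending to $0$, so properties (c) and (d) of Definition \ref{def32} force the $F$-norm of its image to $0$. For $g_n:=P_{K_n}(h_n\chi(A_m))$ the $L^1$-contraction gives $\int_T\|g_n\|\,d\mu\le M_0\mu(A_m)<M_0m$, so by Chebyshev $\mu(\{\|g_n\|>\lambda\})<M_0m/\lambda$; splitting $g_n$ over $\{\|g_n\|\le\lambda\}$ and $\{\|g_n\|>\lambda\}$ and invoking (c),(d) on the former and (c),(e) on the latter bounds $|g_n|_F$ by $\epsilon_0/2$ once $\lambda$ and then $m$ are chosen small. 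This contradicts $|P_{K_n}(h_n)|_F\ge\epsilon_0$ and establishes the lemma; it is the genuinely new ingredient, complementing Theorem \ref{thm31}.

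With $\delta_0$ from this lemma applied with $M_0=5M$ and target $\epsilon/4$, and shrunk so that $\delta_0<\epsilon/4$, I would set $\delta=\delta_0$ at the outset. For a net point, since $\sup_t\|(f_i-w_i)(t)\|\le 5M$ and $|f_i-w_i|_F<\delta_0$, the lemma gives $|P_K(f_i-w_i)|_F<\epsilon/4$, whence, using $P_K(w_i)=w_i$,
$$|P_K(f_i)-f_i|_F\le|P_K(f_i-w_i)|_F+|P_K(w_i)-w_i|_F+|w_i-f_i|_F<\epsilon/4+0+\delta_0<\epsilon/2.$$
For arbitrary $f\in Z$, choosing $f_i$ with $|f-f_i|_F<\delta_0$ and writing $|P_K(f)-f|_F\le|P_K(f-f_i)|_F+|P_K(f_i)-f_i|_F+|f_i-f|_F$, the first term is $<\epsilon/4$ by the lemma (as $\sup_t\|(f-f_i)(t)\|\le 2M$), the second is $<\epsilon/2$, and the third is $<\delta_0<\epsilon/4$. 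Hence $|P_K(f)-f|_F<\epsilon$ for every $f\in Z$, which is the assertion. The only delicate points to verify carefully are the contraction inequalities for the extension of $P_K$ to bounded non-simple functions and the uniformity of the measure-modulus extracted from property (e); these are routine once the explicit averaging formula is in hand.
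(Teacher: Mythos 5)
Your proposal is correct, and it takes a genuinely different route from the paper's proof. The paper fixes a countable dense subset $\{g_n\}$ of $Z$, takes simple approximants $s_{k,n}\in W^{4M}$ of each $g_n$, builds an increasing sequence of partitions refining all the associated ones (Lemma \ref{lem37}), and then argues by contradiction at the top level: assuming $|f_n-P_nf_n|_F\ge\epsilon$, it extracts subsequences converging in $F$-norm and $\mu$-a.e., approximates the $f_n$ by doubly indexed simple functions $w_{k_n,n}$, and closes the estimate via Lemma \ref{lem36} together with Theorem \ref{thm31} (the uniform-over-partitions Cauchy estimate for $P_K$ along $\mu$-a.e.\ Cauchy sequences of simple functions), which is its only uniformity engine. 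You instead use total boundedness to get a finite $\delta$-net, reproduce the finitely many simple approximants exactly under a single common refinement (Lemmas \ref{lem36} and \ref{lem37}), and replace Theorem \ref{thm31} by a new, stronger uniformity lemma: $|P_K(h)|_F$ is small uniformly over \emph{all} partitions $K$ and all $h\in X$ with $\sup_t\|h(t)\|\le M_0$ and $|h|_F$ small. Your proof of that lemma is sound: the averaging formula for $P_K$ on bounded functions (justified from Lemma \ref{lem34} by dominated convergence) makes $P_K$ simultaneously a $\sup$-norm and an $L^1$-contraction; then Lemma \ref{lem33a}, Egorov (Lemma \ref{lem32}), Chebyshev applied to the $L^1$ bound, and the uniform moduli extracted from conditions (c), (d), (e) of Definition \ref{def32} (routine contradiction arguments, as you note) give the conclusion. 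You also correctly diagnosed why the naive Egorov splitting used in Theorem \ref{thm31} cannot work here — a cell of $K$ may straddle the exceptional set, spreading mass of size $M_0$ over all of $T$ — which is exactly what the $L^1$/Chebyshev step, absent from the paper, is designed to handle. As to what each approach buys: yours is more quantitative and modular, since the equicontinuity-at-zero of the family $\{P_K\}$ on uniformly bounded sets yields the theorem (and, incidentally, Lemma \ref{lem35} uniformly in $K$) by plain triangle inequalities, with no top-level contradiction and no subsequence bookkeeping; the paper's route stays closer to simple functions and only needs the weaker Theorem \ref{thm31}, at the price of a more tangled argument involving a dense set and nested approximants. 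Two small points you should write out explicitly, though both are shared with (and glossed over in) the paper: when forming common refinements one must discard cells of measure zero, which is harmless since everything is defined up to $\mu$-null sets, and the moduli from (d) and (e) must be verified to be uniform over sets and vectors, not merely sequential.
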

\begin{proof}
Since $Z$ is compact, there exists a countable, dense set $ A= \{ g_n \} \subset Z.$ Fix for any $n \in \mathbb{N}$ a sequence $ \{s_{k,n}\} \subset W^{4M}$
such that $ s_{k,n} \rightarrow_k g_n$ $ \mu$-almost everywhere. (By Lemma \ref{lem33}) such a sequence exists.) Let $\{ U_{k,n} \}$ be the sequence of partitions associated with
$\{s_{k,n}\}.$ Let $ \phi: \mathbb{N} \rightarrow  \mathbb{N} \times \mathbb{N}$ be a fixed bijection. Put $K_n = U_{\phi(n)}.$ By Lemma \ref{lem37} there exists a sequence
of partitions $\{ S_n\}$ such that $ S^i \leq S^j $ for $ i \leq j$ and for any $ n \in \mathbb{N},$ $ K^n \leq S^j$ for $ j \geq n.$
Let $P_n = P_{S_n}$ be the projection associated with $S_n$  by Theorem \ref{thm31}. We show that for any $ \epsilon > 0$ there exists $n \in \mathbb{N}$ such that
$$
sup\{ |P_n(f)-f|_F: f \in Z \} < \epsilon.
$$ 
Assume on the contrary that there exists $ \epsilon > 0 $ such that for any $ n \in \mathbb{N}$ there exists $ f_n \in Z$ such that
$$
|f_n- P_nf_n|_F \geq \epsilon. 
$$ 
Passing to a converging subsequence, if necessary, we may assume that $ |f_n-f|_F \rightarrow 0$ for some $ f \in Z. $ Again, by Lemma \ref{lem33a}, passing to a convergent subsequence, if necessary, we may assume that $ f_n \rightarrow f$ $\mu$-almost everywhere.
Let for any $ n \in \mathbb{N},$ $ \{ w_{k,n} \}\subset W^{4M}$ be such that $ |f_n -w_{k,n}|_F \rightarrow_k 0$ and $ \|w_{k,n}(t) - f_n(t)\| \rightarrow 0$ $\mu$-almost everywhere. Since $ \{ g_n\}$ is a dense subset of $Z,$ by Lemma \ref{lem33a}, 
we can asssume that 
for any $ n \in \mathbb{N},$ $ \{ w_{k,n}\} \subset \bigcup_{m=1}^{\infty} \{ s_{k,m}\}.$
Since $ f_n \rightarrow f$ $\mu$-almost everywhere and $ w_{k,n} \rightarrow_k f_n$ $\mu$-almost everywhere, by Lemma \ref{lem33a}, we can select for any $ n \in \mathbb{N}$ $w_{k_n,n}$ such that
$$
|f-w_{k_n,n}|_F \rightarrow_n 0, \ f(t) - w_{k_n,n}(t) \rightarrow 0 \ \mu\hbox{-almost everywhere ,}
$$
$ |P_n(f_n) - P_n(w_{k_n,n})| \leq \epsilon/8,$ 
and $ w_{k_n,n} $ satisfies the Cauchy condition with respect to the convergence $ \mu$-almost everywhere.
Fix $ n \in \mathbb{N}$ such that $|f_n-f|_F \leq \epsilon /8.$
$$
|f_n -P_n(f_n) |_F \leq |f-f_n|_F + |f-P_n(f)|_F + |P_n(f_n) - P_n(f)|_F. 
$$
By definition of $ P_n,$ $\{s_{k,n} \}$ and $ \{ w_{k,n}\} $ there exists $ m_o \in \mathbb{N} $ such that for $ m \geq m_o,$ $ |f-w_{k_m,m}|_F \leq \epsilon/8, $ $w_{k_m.m} = P_n(w_{k_m,m})$ and $ |P_nf -P_n(w_{k_m,m})| < \epsilon/8.$
Hence, 
$$
|f-P_nf|_F \leq |f-w_{k_m,m}|_F +|w_{k_m,m}-P_n(f)|_F 
$$
$$
= |f-w_{k_m,m}|_F +|P_n(w_{k_m,m})-P_n(f)|_F \leq \epsilon/4.
$$
Also 
$$
|P_n(f_n) - P_n(f)|_F \leq |P_n(f_n) - P_n(w_{k_n,n})|_F 
$$
$$
+ |P_n(w_{k_n,n}) - P_n(w_{k_m.m})|_F + |P_n(f)- P_n(w_{k_m,m})|_F 
$$
Since $\{w_{k_n, n }\}$ satisfies the Cauchy condition with respect to the $\mu$-almost everywhere convergence, by Theorem \ref{thm31}, for $ n,m \geq n_o$ 
$$
|P_n(w_{k_n,n}) - P_n(w_{k_m.m})|_F \leq \epsilon/8.
$$
Consequently, $|P_n(f_n)- f_n|_F < \epsilon$ for $n$ sufficiently large, a contradiction. 
\end{proof}

\section{Admissibility}
In this section we prove the admissibility of any $F$-admissible $F$-space.
We begin with the following proposition by showing that the space of simple functions generated by a fixed partition $K$ of $T$ is admissible.
\begin{prop}
\label{fin-dim}
Let $K =\{ K_1, \cdots, K_n\}$  be a finite partition of
$T$ such that $\mu(K_i) > 0$ for any $i \in \{1,...,n\}.$ Then, the subspace
\[
S_K=  \Big\{ s \in X : s= \sum_{i=1}^{n}w_{i}\chi(A_{i}),
\  \ w_{i}\in W \Big\}
\]
of $X$ is admissible.
\end{prop}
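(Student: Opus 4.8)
The plan is to identify $S_K$ with the Banach space $W^n$ and then to invoke the admissibility of locally convex spaces. I would consider the linear bijection
\[
\Phi: W^n \to S_K, \qquad \Phi(w_1,\dots,w_n) = \sum_{i=1}^n w_i \chi(K_i),
\]
where $W^n$ carries the product topology, say the norm $\max_i \|w_i\|$. If $\Phi$ is shown to be a homeomorphism, then, since $W^n$ is a Banach space and hence locally convex, it is admissible by the classical result cited in \cite{Na}; and because admissibility is preserved under linear homeomorphisms (one conjugates any approximating map by $\Phi$), $S_K$ is admissible as well. I would also note that the admissibility of $S_K$ as a subset of $X$ coincides with its admissibility as a space in the subspace topology, since the sets $V \cap S_K$, with $V$ a neighbourhood of $0$ in $X$, are exactly the neighbourhoods of $0$ in $S_K$.

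Continuity of $\Phi$ I expect to be routine. If $w^{(k)} \to w$ in $W^n$, then by the triangle inequality $(iii)$,
\[
|\Phi(w^{(k)}) - \Phi(w)|_F \leq \sum_{i=1}^n |(w_i^{(k)} - w_i)\chi(K_i)|_F,
\]
and each summand tends to $0$ by property $(d)$, as $\|w_i^{(k)} - w_i\| \to 0$ and $\mu(K_i) < \infty$. The substance of the argument is the continuity of $\Phi^{-1}$, i.e.\ the implication $|\Phi(w^{(k)})|_F \to 0 \Rightarrow \|w_i^{(k)}\| \to 0$ for each $i$. The crucial observation I would isolate first is that, by property $(c)$, the value $|f|_F$ depends only on the scalar function $t \mapsto \|f(t)\|$: if $\|f(t)\| = \|g(t)\|$ $\mu$-a.e.\ then $(c)$ gives both $|f|_F \le |g|_F$ and $|g|_F \le |f|_F$. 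Fixing $i$ and using that $\|w_i^{(k)}\chi(K_i)(t)\| \le \|\Phi(w^{(k)})(t)\|$ for all $t$, property $(c)$ yields $|w_i^{(k)}\chi(K_i)|_F \le |\Phi(w^{(k)})|_F \to 0$. I would then define $g_i(r) := |w\chi(K_i)|_F$ for any $w$ with $\|w\| = r$; this is well defined by the preceding observation, is nondecreasing by $(c)$, satisfies $g_i(0)=0$, and obeys $g_i(r) > 0$ for $r > 0$ since $w\chi(K_i) \ne 0$ in $X$ when $\mu(K_i) > 0$ (by $(i)$, or $(e)$ applied to the constant set $K_i$).

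Because $g_i$ is nondecreasing with $g_i(r)>0$ for $r>0$, the relation $g_i(\|w_i^{(k)}\|) = |w_i^{(k)}\chi(K_i)|_F \to 0$ forces $\|w_i^{(k)}\| \to 0$: otherwise a subsequence would satisfy $\|w_i^{(k)}\| \ge \delta$, whence $g_i(\|w_i^{(k)}\|) \ge g_i(\delta) > 0$, a contradiction. This establishes that $\Phi^{-1}$ is continuous, so $\Phi$ is a linear homeomorphism. I expect this reduction to the monotone scalar function $g_i$ to be the only delicate point, since $W$ may be infinite-dimensional and its unit sphere is not compact; it is precisely property $(c)$ (together with $(d)$ and $(e)$) that sidesteps the difficulty by forcing $|\cdot|_F$ to depend on pointwise norms alone. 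With the homeomorphism $\Phi$ in hand, the admissibility of $S_K$ follows at once from the admissibility of the locally convex space $W^n$ and its stability under linear homeomorphisms, completing the proof.
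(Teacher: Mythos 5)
Your proposal is correct, but it packages the argument differently from the paper. The paper works directly with a compact set $Z\subset S_K$: it first shows that the coordinate maps $g\mapsto w_i(g)$ are continuous (by the same combination of property (c) with the positivity of $|z\chi(K_i)|_F$ for $z\neq 0$, $\mu(K_i)>0$ that underlies your monotone functions $g_i$), so that the coordinate sets $C_i=\{w_i(g):g\in Z\}$ are compact in $W$; it then applies the admissibility of the locally convex space $W$ to $C=\bigcup_{i=1}^n C_i$, obtaining a finite-rank continuous $H_\delta$ with $\|w-H_\delta(w)\|\le\delta$ on $C$, reassembles it coordinatewise as $H_\delta(g)=\sum_{i=1}^n H_\delta(w_i(g))\chi_{A_i}$, and closes with the estimate $|g-H_\delta(g)|_F\le\sum_{i=1}^n|\delta\chi_{A_i}|_F$, made small via properties (c) and (d). You instead promote the same two continuity facts into the statement that $\Phi$ is a linear homeomorphism of $W^n$ onto $S_K$, and then invoke general transfer principles: admissibility of the locally convex space $W^n$, invariance of admissibility under linear homeomorphisms, and the identification of subset-admissibility of a linear subspace with its intrinsic admissibility. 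The ingredients coincide — (c), (d), positivity of the $F$-norm on nonzero simple functions, and admissibility of Banach spaces — but your version is more structural: it identifies $S_K$ topologically as a Banach space, which yields the proposition at once (and any other property invariant under linear isomorphism), whereas the paper's version stays strictly inside Klee's definition and exhibits the approximating operators explicitly, with no auxiliary principles to justify. One caveat, which you share with the paper rather than introduce: both arguments apply property (d) to the sets $K_i$ and need $w\chi(K_i)\in X$ for all $w\in W$ (in your case this is needed even for $\Phi$ to take values in $S_K$), which requires $\mu(K_i)<\infty$; this hypothesis is not stated in the proposition, but it holds in the only place the proposition is used, namely in the proof of Theorem \ref{thm1}, where the partitioned set is $T_n$ with $\mu(T_n)<\infty$.
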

\begin{proof}
Let $Z$ be a compact subset of $S_K$ and $\eps >0$ be given.
For each $g \in Z$ we can write
\[ g= \sum_{i=1}^n w_i(g)  \chi_{A_i} \] for suitable elements
$w_i(g) $ of the Banach space $W$.

First we show that for any $ i=1,...,n$ the mapping $ g \rightarrow w_i(g)$
 is continuous with respect to $| \cdot |_F.$
Assume on the contrary that there exist $ i \in  \{1,...,n \}, $  $ g_k, g \in Z$ and $ d>0$ such that 
$ \|w_i(g_k) - w_i(g)\| \geq d. $ 
Fix $ z \in W, $ $ \|z \|=d. $ Let $ f = z \chi(A_i). $
Obserwe that for any $ t \in T, $  
$$
\|(g_k - g)(t)\| \geq \|(g_k-g)\chi(A_i)(t)\| \geq \|f(t)\|.
$$
Since $ \mu(A_i) >0, $ and $ X$ is an $F$-admissible $F$-space, $ |f|_F > 0, $ which leads to a contradiction.
Consequently, for any fixed $i=1, \cdots,n$, the set $ C_i =\{
w_i(g): \ g \in Z \}$ is a compact subset of $W$, and
$C=\bigcup_{i=1}^n C_i$ is a compact subset of $W$ too.\\
Let $\delta > 0 $ be fixed. Then, by the admissibility of the Banach space
$W$,
 there exist a finite dimensional space $Z_\delta = \mbox{span}[z_1, \cdots, z_m]$
 in $W$ and
a continuous mapping $H_\delta: \ C \to Z_\delta$ such that
   \begin{equation} \label{C}
   \|w - H_\delta (w) \| \le \delta \ \
    \mbox{for all} \ \ w\in   C.
  \end{equation}
  Then, for each $i \in \{1, \cdots,n\},$ $ g \in S_K$ and for suitable
  $w_j^i(g) \in \mathbb{R},$ $j=1,\cdots, m$ we can write
\[
  H_\delta(w_i(g))=    \sum_{j=1}^m
w_j^i(g)z_j.
  \]
As no confusion can arise we denote again by
     $H_\delta:  Z \to S_K$ the continuous mapping defined by
\[
 H_\delta (g)= \sum_{i=1}^n H_\delta (w_i(g)) \chi_{A_i} =
\sum_{i=1}^n  \Big(  \sum_{j=1}^mw_j^i(g) z_j   \Big) \chi_{A_i}.
\]
Then,
 $ H_\delta (S_K) \subseteq
   \mbox{ \rm span}[\chi_{A_i}z_j , i=1,\cdots, n;\ j=1,\cdots, m]$
   and  dim(span$[H_\delta (S_K)]) < + \infty$. On the other hand for
   each $g \in Z$ we have
\begin{eqnarray} \label{*}
 |g - H_\delta(g)|_F &= &|   \sum_{i=1}^n
w_i(g)\chi_{A_i} -
 \sum_{i=1}^n (  \sum_{j=1}^m w_j^i(g) z_j  )  \chi_{A_i}|_F
\\
& \le&   \sum_{i=1}^n | (w_i(g)  - \sum_{j=1}^m w_j^i(g)z_j
)\chi_{A_i} |_F . \nonumber
\end{eqnarray}
Since $X$ is an $F$-admissible $F$ space
\[
\sum_{i=1}^n | (w_i(g)  - \sum_{j=1}^m w_j^i(g)z_j
)\chi_{A_i}|_F \leq  \sum_{i=1}^n |\delta \chi_{A_i}|_F.
\]
Since $ | \cdot |_F$ is an F-norm, for any $ \eps > 0$ we can find $ \delta > 0$ such that $\sum_{i=1}^n |\delta \chi_{A_i}|_F <
\eps.$ This shows the admissibility of $ Z$ in $ X.$
\end{proof}

In order to prove our main result of this paper Theorem \ref{thm1} we need the following two lemmas.

\begin{lem}
\label{XinP}
Let $ Z \subset X $ be a compact set. Set  $F_n f = f \chi_{T_n},$ where $T= \bigcup_{n=1}^{\infty} T_n,$ $ \mu(T_n) < \infty, $ $ T_n \subset T_{n+1} $ for any $ n \in \mathbb{N}.$ 
Then, for any $f, g \in X$ and $ n \in \mathbb{N},$
\[
| F_n(f) -F_n(g)|_F \leq |f -g|_F.
\]
Moreover, for any $\eps > 0$ there exists $ n_o \in  \mathbb{N} $ such that for each $n \ge n_o$ we have
\[
\sup \{ | f - F_n(f)|_F: f \in Z \} < \eps .
\]
\end{lem}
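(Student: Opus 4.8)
The plan is to prove the two assertions separately: the first is an immediate consequence of the lattice-type monotonicity built into $F$-admissibility, while the second is a standard uniformization over a compact set that rests on the first.

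For the contraction inequality I would observe that $F_n(f)-F_n(g)=(f-g)\chi_{T_n}$, and that $f-g\in X$ since $X$ is a linear space. Pointwise one has $\|(f-g)(t)\chi_{T_n}(t)\|\le\|(f-g)(t)\|$ for every $t\in T$, because $\chi_{T_n}(t)\in\{0,1\}$. Applying condition (c) of Definition \ref{def32} to the pair $(f-g)\chi_{T_n}$ and $f-g$ then yields simultaneously that $(f-g)\chi_{T_n}\in X$ and $|(f-g)\chi_{T_n}|_F\le|f-g|_F$, which is exactly the claimed estimate $|F_n(f)-F_n(g)|_F\le|f-g|_F$.

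For the uniform approximation I would first establish pointwise (in $f$) convergence $|f-F_n(f)|_F\to 0$. Fix $f\in X$ and set $h_n=F_n(f)=f\chi_{T_n}$; by condition (c) each $h_n\in X$. Since $T=\bigcup_n T_n$ with $T_n\subset T_{n+1}$, for each $t\in T$ there is $N$ with $t\in T_n$ for all $n\ge N$, so $h_n(t)=f(t)$ eventually and hence $h_n\to f$ $\mu$-almost everywhere; moreover $\|h_n(t)\|\le\|f(t)\|$ for every $t$. Condition (f) of Definition \ref{def32} then gives $|F_n(f)-f|_F\to 0$. Here the monotonicity $T_n\subset T_{n+1}$ is exactly what forces eventual membership and hence the $\mu$-a.e. convergence.

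The uniformization is the final step and the only place that uses the compactness of $Z$. Given $\eps>0$, compactness (total boundedness) yields a finite $\eps/3$-net $\{g_1,\dots,g_k\}\subset Z$, and by the pointwise convergence just proved there is $n_o$ such that $|g_i-F_n(g_i)|_F<\eps/3$ for all $i\in\{1,\dots,k\}$ and all $n\ge n_o$. For arbitrary $f\in Z$ choose $g_i$ with $|f-g_i|_F<\eps/3$ and estimate, for $n\ge n_o$,
$$
|f-F_n(f)|_F\le|f-g_i|_F+|g_i-F_n(g_i)|_F+|F_n(g_i)-F_n(f)|_F,
$$
where the last term is at most $|g_i-f|_F=|f-g_i|_F<\eps/3$ by the contraction inequality of the first part together with the symmetry $|{-}x|_F=|x|_F$ from condition (ii) of Definition \ref{def31a}. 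This yields $|f-F_n(f)|_F<\eps$ uniformly in $f\in Z$. The only mild obstacle is arranging a single threshold $n_o$ that works simultaneously for every member of $Z$; the contraction property is precisely the device that transfers the finitely many estimates on the net to the whole compact set.
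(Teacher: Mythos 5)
Your proposal is correct and follows essentially the same route as the paper: the contraction inequality obtained from the pointwise bound $\|(f-g)(t)\chi_{T_n}(t)\|\le\|(f-g)(t)\|$ and condition (c) of Definition \ref{def32}, the pointwise convergence $|f-F_n(f)|_F\to 0$ obtained from condition (f), and then a compactness uniformization that leans on the contraction. The only difference is in the last step and is immaterial: the paper argues by contradiction, extracting from $Z$ a convergent sequence $f_n\to f$ and applying the same three-term triangle inequality around the limit $f$, whereas you run the argument directly with a finite $\eps/3$-net; the two devices are interchangeable here and neither requires any idea absent from the other.
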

\begin{proof}
Let $f,g \in X.$ Observe that for any $ t \in T$
\[
\|(f-g)(t)\| \geq \|F_n(f)(t)-F_n(g)(t)\|.
\]

Hence, by $F$-admissibility of $X,$
\[
| F_n(f) -F_n(g)|_F \leq |f -g|_F.
\]
Notice that for any $ f \in X$, $F_n(f) \rightarrow f$ $ \mu$-almost everywhere and $ \|F_n(f)(t)\| \leq \|f(t)\| $ for any $ t \in T. $ By $F$-admissibility of $X,$
$|f-F_n(f)|_F \rightarrow 0.$ 
Now assume that there exists $ \epsilon >0$ such that for any $n \in \mathbb{N}$ we have 
$$
|f_n - F_n(f_n)|_F \geq \epsilon.
$$
By compactness of $Z$ we can assume that $|f_n -f|_F \rightarrow 0, $ for some $f\in Z.$ Fix $ n _o \in \mathbb{N}$ such that for $ n \geq n_o,$ $ |f - F_{n}(f)|_F < \epsilon/3 $
and $|f-f_n|_F < \epsilon/3.$
Hence, for any $ n \geq n_o,$
$$
|f_n- F_n(f_n)|_F \leq |f_n-f|_F + |f-F_n(f_n)|_F 
$$
$$
\leq  |f_n-f|_F + |F_n(f)-F_n(f_n)|_F + |f-F_n(f)|_F 
$$
$$
\leq 2|f_n-f|_F + |f-F_n(f)|_F < \epsilon,
$$
a contradiction.
\end{proof}
Let $a>0$. Now we denote by
$R_a$  the radial projection, of the Banach space $W$ onto its closed ball $B_a(W)$ of radius $a$, defined for $w \in W$ by
\[
R_a w= \left \{ \begin{array}{llll} w & \mbox{if} \ \  \
 \|w\| \le a\\
 a \ \frac{w}{\|w\|}  & \mbox{if} \ \ \
 \|w\| >a.
\end{array}
\right.
\]
Then, we define  the mapping $T_a: X \rightarrow  X$ by
setting for $t \in T$
\[
(T_af)(t)= R_a(f(t)).\]
\begin{lem}
\label{radial}
For any $ a > 0$ and $ f, g \in X,$
\[
| T_af -T_ag|_F \leq 2| f -g|_F.
\]
Moreover, for any $\eps > 0$ and for any compact subset $Z$ of $X$ there exists $ a > 0 $ such that
\[
\sup \{ | f - T_af|_F: f \in Z \} < \eps .
\]
\end{lem}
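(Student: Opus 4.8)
The plan is to reduce both assertions to a single pointwise Lipschitz estimate for the radial retraction $R_a$ on $W$, and then to transport this estimate to the $F$-norm by combining the monotonicity axiom (c) with the triangle inequality (iii); the approximation claim is then obtained from the domination axiom (f) together with the sequential compactness of $Z$.

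First I would establish the pointwise inequality
\[
\|R_a w - R_a v\| \le 2\|w-v\| \qquad \text{for all } w,v\in W,
\]
by distinguishing three cases. If $\|w\|,\|v\|\le a$, both points are fixed and the inequality is an equality. If $\|w\|,\|v\|>a$, assuming $\|w\|\ge\|v\|$ and writing
\[
\frac{w}{\|w\|}-\frac{v}{\|v\|} = \frac{w-v}{\|w\|} + v\,\frac{\|v\|-\|w\|}{\|w\|\,\|v\|},
\]
the bound $\big|\,\|w\|-\|v\|\,\big|\le\|w-v\|$ gives $\|R_aw-R_av\|=a\big\|\tfrac{w}{\|w\|}-\tfrac{v}{\|v\|}\big\|\le 2a\|w-v\|/\|w\|\le 2\|w-v\|$. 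Finally, if $\|w\|\le a<\|v\|$ then $R_aw=w$ and $\|v-R_av\|=\|v\|-a\le\|v\|-\|w\|\le\|w-v\|$, so $\|R_aw-R_av\|=\|w-R_av\|\le\|w-v\|+\|v-R_av\|\le 2\|w-v\|$.

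For the first assertion I then note that, for fixed $a$ and $f,g\in X$, this reads $\|(T_af-T_ag)(t)\|\le \|(2(f-g))(t)\|$ for every $t\in T$. Axiom (c) yields $|T_af-T_ag|_F\le |2(f-g)|_F$, and since the $F$-norm need not be homogeneous I use (iii) in the form $|2(f-g)|_F=|(f-g)+(f-g)|_F\le 2|f-g|_F$, which gives the claim. For the approximation statement I first handle a single $f\in X$: since $R_a(f(t))\to f(t)$ for every $t$ as $a\to\infty$ (the value is unchanged once $a>\|f(t)\|$) and $\|(T_af)(t)\|=\min(\|f(t)\|,a)\le\|f(t)\|$ for all $t$, axiom (f) applied to $T_{a_n}f\to f$ for any $a_n\to\infty$ forces $|f-T_{a_n}f|_F\to0$; moreover $\|(f-T_af)(t)\|=\max(0,\|f(t)\|-a)$ is nonincreasing in $a$, so by (c) the quantity $|f-T_af|_F$ decreases to $0$ as $a\to\infty$.

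To upgrade this to uniformity over the compact set $Z$, I argue by contradiction: if some $\eps>0$ survives every $a$, then for each $n$ there is $f_n\in Z$ with $|f_n-T_nf_n|_F\ge\eps$; by compactness I extract a subsequence with $f_{n_k}\to f\in Z$ and estimate
\[
|f_{n_k}-T_{n_k}f_{n_k}|_F \le |f_{n_k}-f|_F + |f-T_{n_k}f|_F + |T_{n_k}f-T_{n_k}f_{n_k}|_F .
\]
The first term tends to $0$ by convergence, the second by the single-function case since $n_k\to\infty$, and the third is $\le 2|f-f_{n_k}|_F\to0$ by the Lipschitz bound already proved; this contradicts the lower bound $\eps$. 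The main obstacle is the pointwise estimate, and within it the mixed case $\|w\|\le a<\|v\|$, where non-expansiveness genuinely fails and the constant $2$ appears; the remaining steps are routine applications of the $F$-admissibility axioms and sequential compactness.
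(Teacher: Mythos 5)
Your proof is correct and follows essentially the same route as the paper: the pointwise Lipschitz-$2$ bound for $R_a$ transported to the $F$-norm via the monotonicity axiom (c) (plus the triangle inequality for the factor $2$), pointwise convergence together with axiom (f) for a single function, and a compactness/contradiction argument for uniformity over $Z$. The only differences are cosmetic: you prove the Lipschitz constant $2$ for the radial projection directly by a three-case analysis (the paper simply cites Dunkl--Williams), and you spell out the step $|2(f-g)|_F \le 2|f-g|_F$, which the paper leaves implicit.
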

\begin{proof}
Fix $ a > 0.$
Note that by definition of $T_a$ for any $ t \in T$
\[
\|T_af - T_ag\| \leq 2 \|f(t)-g(t)\|
\]
as $R_a$ is
a Lipschitz mapping with constant $2$ (see \cite{DW}).
Hence, by $F$-admissibility of $X,$
\[
| T_a(f) -T_a(g)\|_F \leq 2|f -g|_F.
\]
Now fix $ Z \subset X,$ $K$ compact and $ f \in Z.$
Note that for any $ t \in T, $ $ \lim_n (T_nf)(t) = f(t)$ and
$ \|(T_nf)(t)\| \leq \| f(t)\|$ for any $ n \in \mathbb{N}$. By $F$-admissibility of $X,$
\begin{equation}
\label{radial1}
| f - T_n(f)|_F \rightarrow 0.
\end{equation}
To prove our second assert assume by contradiction that there exists $ \eps > 0$ such that for any $n \in \mathbb{N}$ there exists $f_n \in Z$ such that
\[
| f_{n} - T_nf_{n}|_F > \eps.
\]
Without loss of generality, passing to a convergent subsequence if necessary, we can assume that
there exists $f \in X$ such that $ | f_{n} - f|_F \rightarrow 0.$
Fix $ n \geq n_o $ such that for $n \geq n_o$ $|f-f_n|_F < \epsilon/4$ and $ |f-T_n(f)|_F < \epsilon/4.$ Notice that for $ n \geq n_o,$
\[
\begin{array}{lllll}
\vspace{.2cm}
| f_{n} - T_nf_{n}|_F &\leq&
| f - T_n(f)|_F + | T_n(f) - T_n(f_{n})|_F +| f - f_{n}|_F
\\
 &\leq&  3 |f_n - f\|_F +| f - T_n(f)\|_F .
\end{array}
\]
Hence, $| f_{n} - T_n(f_{n})|_F < \eps$ for $n \geq n_o,$ a contradiction.
\end{proof}

 We now are in the position to prove our main
result.
\begin{thm} \label{thm1}
Any $F$-admissible $F$ space is admissible.
\end{thm}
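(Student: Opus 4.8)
The plan is to reduce the general case to the already-established admissibility of the finite-partition simple-function spaces $S_K$ (Proposition \ref{fin-dim}) by successively removing the two obstructions to compactness in an $F$-admissible space: the possibly infinite measure of the support and the possible unboundedness of the values. Fix a compact set $Z \subset X$ and a neighborhood $V$ of zero. Since the topology of the $F$-space $X$ is induced by the metric $|x-y|_F$, it suffices to choose $\eps > 0$ with $\{x : |x|_F < \eps\} \subset V$ and produce a continuous map $\Phi \colon Z \to X$ with $\dim(\mathrm{span}[\Phi(Z)]) < +\infty$ and $|f - \Phi(f)|_F < \eps$ for every $f \in Z$. I would build $\Phi$ as a composition of four continuous maps, each contributing an error of at most $\eps/4$.

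First I would invoke Lemma \ref{XinP} to fix $n$ so large that $\sup\{|f - F_n(f)|_F : f \in Z\} < \eps/4$; the support-truncation $F_n$ is $1$-Lipschitz, so $F_n(Z)$ is compact and consists of functions supported on $T_n$, which has finite measure. Next I would apply Lemma \ref{radial} to the compact set $F_n(Z)$ to fix a radius $a > 0$ with $\sup\{|g - T_a(g)|_F : g \in F_n(Z)\} < \eps/4$; the radial truncation $T_a$ is $2$-Lipschitz, so $Z' := T_a(F_n(Z))$ is compact, supported on $T_n$, and uniformly bounded in sup-norm by $a$. Regarding the elements of $Z'$ as functions on the finite measure space $T_n$, I can apply Theorem \ref{thm33} (with bound $M = a$) to obtain a finite partition $K$ of $T_n$ such that $\sup\{|P_K(h) - h|_F : h \in Z'\} < \eps/4$.

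At this point the averaging operator $P_K$ maps $Z'$ into the finite-partition space $S_K$, and it is continuous on the uniformly bounded set $Z'$ by Lemma \ref{lem35}, so $P_K(Z')$ is a compact subset of $S_K$. Since $S_K$ is admissible by Proposition \ref{fin-dim}, there is a continuous map $H \colon P_K(Z') \to S_K$ with $\dim(\mathrm{span}[H(P_K(Z'))]) < +\infty$ and $|u - H(u)|_F < \eps/4$ for all $u \in P_K(Z')$. I would then set $\Phi = H \circ P_K \circ T_a \circ F_n$, which is continuous (the uniform bound $a$ legitimizing Lemma \ref{lem35} for $P_K$) and has finite-dimensional range inherited from $H$. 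The triangle inequality for $|\cdot|_F$ gives, for each $f \in Z$,
\[
|f - \Phi(f)|_F \le |f - F_n f|_F + |F_n f - T_a F_n f|_F + |T_a F_n f - P_K T_a F_n f|_F + |P_K T_a F_n f - H P_K T_a F_n f|_F < \eps,
\]
since each of the four terms is below $\eps/4$ by the four steps above.

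The main obstacle is not any individual estimate but arranging the truncations so that the central averaging step is legitimate: both Theorem \ref{thm33} and the continuity of $P_K$ from Lemma \ref{lem35} require a uniform sup-norm bound, while $P_K$ itself is only defined over a finite measure space (one divides by $\mu(T_i)$). This is precisely why the order of operations is forced—$F_n$ first to secure finite measure of the support, then $T_a$ to secure the uniform bound—after which $P_K$ collapses the problem onto the finite-partition space $S_K$, where admissibility is already known.
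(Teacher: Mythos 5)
Your proposal is correct and follows essentially the same route as the paper's own proof: the identical four-stage composition $H \circ P_K \circ T_a \circ F_n$ with error $\eps/4$ at each stage, invoking Lemma \ref{XinP}, Lemma \ref{radial}, Theorem \ref{thm33}, Lemma \ref{lem35} for continuity of $P_K$, and Proposition \ref{fin-dim} in the same order. Your explicit remark that $T_a\circ F_n$ must precede $P_K$ so that the averaging operator acts on uniformly bounded functions over a finite measure set is exactly the logic implicit in the paper's argument.
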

\begin{proof}
 Fix $Z$ a compact set in $X$,
and $\varepsilon
>0$.
Since $Z$ is compact, by Lemma \ref{XinP}, there exists $n \in \mathbb{N}$ such that
\[
\sup\{ |F_nf - f |_F : f \in Z\} \leq \eps/4.
\]
Moreover, $F_n$ is continuous.
By Lemma \ref{radial} applied to the compact set $ F_n(Z),$ there exists $ a >0$ satisfying
\[
\sup\{ |T_aF_n(f) - F_n(f) |_F : f \in Z\} \leq \eps/4.
\]
Since $T_a$ is a continuous mapping, by Theorem \ref{thm33} applied to $(T_a\circ F_n)(Z)$ and $ T_n \in \Sigma,$ $ \mu(T_n) < \infty,$ there exists $ k\in \mathbb{N}$ with
\[
\sup\{ \|P_kT_aF_n(f) - T_aF_n(f) \|_{\rho} : f \in Z\} \leq \eps/4.
\]
Notice that by Lemma \ref{lem35}, $P_k $ is a continuous mapping for any $ k \in \mathbb{N}.$
Hence, by Proposition \ref{fin-dim}  applied to $W= (P_k \circ T_a\circ F_n)(Z)$ there exits $H_{\eps}: W \rightarrow E_{\rho}$
such that span$[H_{\eps}(W)]$ is finite-dimensional and
\begin{equation*}
\sup\{ \|H_{\eps}P_kT_aF_n(f) - P_kT_aF_n(f) \|_{\rho} : f \in Z\} \leq \eps/4.
\end{equation*}
Notice that the continuous mapping $ H= H_{\eps} \circ P_k \circ T_a\circ F_n$ satisfies dim[span$[H(Z)]\ < \infty .$ Moreover, by the above facts, for any $ f \in Z$
\[
| f - H(f)|_F \leq  | F_nf - f |_F + |T_a(F_nf) - F_n(f) |_F
\]
\[  + |P_kT_aF_n(f) - T_aF_n(f) |_F   +  |H_{\eps}P_kT_aF_n(f) - P_kT_aF_n(f) |_F \leq \eps.
\]
and  the admissibility of $X$ is proved. 
\end{proof}
Now we present two important consequences of admissibility and Theorem \ref{thm1}.
\begin{thm}
 \label{fixpoint}
Let $X$ be an admissible $F$-space. Let $T: X \rightarrow X$ be a compact and continuous mapping. Then, there exists $ f \in X$ such that $Tf=f.$ 
\end{thm}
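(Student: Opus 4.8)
The plan is to run the classical Schauder--Tychonoff scheme, using admissibility to replace the identity by finite-dimensional maps and then invoking the Brouwer fixed point theorem on each finite-dimensional approximant.

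First I would set $K = \overline{T(X)}$; since $T$ is compact this is a compact subset of $X$, and as $X$ is metrizable via $|\cdot|_F$ it is sequentially compact. For each $k \in \mathbb{N}$ consider the neighborhood $V_k = \{x \in X : |x|_F < 1/k\}$ of zero. By admissibility of $X$ applied to the compact set $K$ (with $Z = X$), there is a continuous map $H_k : K \to X$ with $\dim(\mathrm{span}[H_k(K)]) < \infty$ and $|x - H_k x|_F < 1/k$ for every $x \in K$.

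Next I would fix $k$ and produce a finite-dimensional fixed point. Let $E_k = \mathrm{span}[H_k(K)]$, a finite-dimensional subspace of $X$; since the $F$-norm makes $X$ Hausdorff, $E_k$ carries its Euclidean topology and is linearly homeomorphic to some $\mathbb{R}^{d}$. The set $H_k(K)$ is compact, hence $C_k = \overline{\mathrm{conv}}(H_k(K)) \subset E_k$ is compact and convex. The composition $H_k \circ T$ is continuous, and for $x \in C_k \subset X$ we have $T(x) \in T(X) \subset K$, so $H_k(T(x)) \in H_k(K) \subset C_k$; thus $H_k \circ T$ maps $C_k$ continuously into itself, and Brouwer's theorem yields $x_k \in C_k$ with $H_k(T(x_k)) = x_k$.

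Finally I would pass to the limit. Put $y_k = T(x_k) \in K$; then $x_k = H_k(y_k)$, and since $y_k \in K$ admissibility gives $|y_k - x_k|_F = |y_k - H_k(y_k)|_F < 1/k$, that is $|T(x_k) - x_k|_F \to 0$. By sequential compactness of $K$ a subsequence $y_{k_j} \to y \in K$; since $|y_{k_j} - x_{k_j}|_F \to 0$, also $x_{k_j} \to y$. Continuity of $T$ then gives $T(x_{k_j}) \to T(y)$, while $T(x_{k_j}) = y_{k_j} \to y$, whence $Ty = y$. The main obstacle is the finite-dimensional step: one must ensure that $\overline{\mathrm{conv}}(H_k(K))$ is genuinely compact and that Brouwer's theorem is applicable, which rests on the fact that a finite-dimensional subspace of the Hausdorff $F$-space $X$ inherits the usual Euclidean topology. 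Once this is secured, the self-mapping property of $H_k \circ T$ on $C_k$ and the extraction of the convergent subsequence are routine.
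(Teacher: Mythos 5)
Your proposal is correct and follows essentially the same route as the paper: approximate the identity on $\overline{T(X)}$ by finite-rank maps $H_k$ via admissibility, apply Brouwer's theorem to $H_k\circ T$ on the compact convex hull of $H_k(\overline{T(X)})$ (the paper invokes Carath\'eodory's theorem for this compactness, you invoke the Euclidean topology of finite-dimensional subspaces -- both standard), and then pass to the limit using compactness of $\overline{T(X)}$ and continuity of $T$. No gaps; the argument matches the paper's proof step for step.
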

\begin{proof} The proof which will be presented works for any admissible Hausdorff topological vector space and it is well-known. We present it for a sake of completeness.
Since $T$ is a compact mapping, the set $Z = cl[T(X)] \subset X$ is a compact set. Hence, by Theorem \ref{thm1}
for any $ \epsilon > 0 $ there exists a continuous mapping $ H_{\epsilon }: Z \rightarrow X$ such that dim[span$[H_{\epsilon}(Z)]\ < \infty $ and $\sup_{f \in Z} \|f - H_{\epsilon}f\|_{\rho} \leq \epsilon .$ 
Let $ T_{\epsilon } = H_{\epsilon} \circ T. $ Notice that $ T_{\epsilon}(X) =H_{\epsilon}(T(X))\subset H_{\epsilon}(Z)$ and consequently,
since $ conv(H_{\epsilon}(Z)) \subset X,$
$$ 
T_{\epsilon}[conv(H_{\epsilon}(Z))] \subset T_{\epsilon}(X) \subset H_{\epsilon}(Z) \subset conv(H_{\epsilon}(Z)).
$$ 
Also $T_{\epsilon}$ is a continuous map.
Since dim[span$[H_{\epsilon}(Z)]\ < \infty, $ by the Carath\'eodory Theorem, the set  $conv(H_{\epsilon}(Z))$ is a compact set. By the Brouwer Theorem
there exists $ f_{\epsilon} \in X$ such that $ T_{\epsilon} f_{\epsilon} = f_{\epsilon}.$
Hence, for any $ n \in \mathbb{N}, $ 
$$
| T f_{1/n} - f_{1/n} |_F = | T f_{1/n} - T_{1/n}f_{1/n} |_F = | T f_{1/n} - (H_{1/n} \circ T)f_{1/n} |_F \leq 1/n,
$$
since $ Tf_{1/n} \in Z.$ By the compactness of $ Z$ we can assume that $ \lim_n | T f_{1/n} - f |_F = 0 $ for some $f \in Z.$ Hence, by the above estimate  
$$ 
| f_{1/n} - f |_F  \leq | T f_{1/n} - f_{1/n} |_F + |f-Tf_{1/n}|_F.
$$   
Hence, $ lim_n |f_{1/n}-f|_F =0.$ 
By the continuity of $T$, $\lim_n | T f_{1/n} - Tf |_F = 0, $ which gives that $ Tf =f.$
\end{proof}
\begin{thm}
\label{fixpoint2}
Let $X$ be an admissible $F$ space and let $C \subset X$ be a nonempty set. Assume that $C$ is a continuous retract of $X,$ i.e. there exists a continuous mapping $ P:X \rightarrow C$ such that 
$Pc=c$ for any $ c \in C.$ Let $ T:C \rightarrow C$ be a continuous compact mapping. Then, there exist $x\in C$ such that $Tx=x.$
\end{thm}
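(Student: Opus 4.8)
The plan is to reduce this statement to the fixed point theorem already established for the whole space, namely Theorem \ref{fixpoint}, by absorbing the retraction into the map. First I would introduce the auxiliary self-map $S = T \circ P : X \to X$. Since $P : X \to C$ is continuous and $T : C \to C$ is continuous, and since $C \subset X$, the composition $S$ is a continuous mapping of $X$ into itself.

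Next I would check that $S$ is compact in the sense used by Theorem \ref{fixpoint}, i.e. that $cl[S(X)]$ is compact. Because $P(X) \subseteq C$, one has $S(X) = T(P(X)) \subseteq T(C)$, and therefore $cl[S(X)] \subseteq cl[T(C)]$. The set $cl[T(C)]$ is compact by the hypothesis that $T$ is a compact mapping, and $cl[S(X)]$, being a closed subset of a compact set in a Hausdorff space, is itself compact. Thus $S : X \to X$ is continuous and compact, exactly the setting of Theorem \ref{fixpoint}.

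Applying Theorem \ref{fixpoint} to $S$ then produces a point $x \in X$ with $S(x) = x$, that is, $x = T(P(x))$. It remains to see that this $x$ actually lies in $C$ and is fixed by $T$ itself. Since $P(x) \in C$ and $T$ maps $C$ into $C$, the value $T(P(x))$ belongs to $C$; but $x = T(P(x))$, so $x \in C$. The retraction identity $Pc = c$ for $c \in C$ now gives $P(x) = x$, and hence $x = T(P(x)) = T(x)$, which is the desired fixed point.

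The argument is essentially immediate once Theorem \ref{fixpoint} is available; the only steps that demand a moment's care are the verification that $S$ is compact as a self-map of $X$ — where the point is to observe that $S(X)$ already sits inside $T(C)$, rather than attempting to control the image over all of $X$ — and the concluding remark that the fixed point lands in $C$ automatically because $T$ takes values in $C$. I do not anticipate any genuine analytic obstacle: the entire content lies in this reduction, and no properties of the $F$-norm beyond those already invoked in Theorem \ref{fixpoint} are required.
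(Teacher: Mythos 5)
Your proposal is correct and follows exactly the paper's own argument: apply Theorem \ref{fixpoint} to $T\circ P$, note the fixed point $x=(T\circ P)x$ lies in $C$ since $T$ maps into $C$, and then use $Px=x$ to conclude $Tx=x$. The only difference is that you spell out the continuity and compactness of $T\circ P$ explicitly, which the paper leaves implicit.
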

\begin{proof}
Applying Theorem \ref{fixpoint} to the mapping $T \circ P$ we get that there exists $ x \in X $ such that $ (T \circ P)x =x.$ Since $ (T \circ P)x \in C, $ $x \in C$.
Consequently $ Px=x$ and $ Tx =x, $ as required.
\end{proof}
Now we present some examples of Banach spaces which fulfill the requirement of Definition \ref{def32}.
\begin{exa}
	Let $L^{0}(T,\Sigma,\mu)$ be the set of all (equivalence classes of) extended real valued measurable functions on $T$ with $\sigma$-finite measure space $(T,\Sigma,\mu)$. Let $E\subset L^0(T,\Sigma,\mu)$ be a Banach function space equipped with the norm $\Vert \cdot \Vert$ and $E_b$ be a closure of the set of all simple functions of $E$. Clearly, it is well known that $E_b\subset{E}$. By definition of the Banach function space we observe that the norm is monotone i.e. for any $x\in L^0$, $y\in E$ with $|x|\leq|y|$ a.e., we get $x\in E$ and $\|x\|_E\leq\|y\|_E$. Furthermore, for any set $A\in\Sigma$ with $\mu(A)<\infty$ and for any sequence $(t_n)\subset\mathbb{R}$ such that $|t_n|\rightarrow{0}$ as $n\rightarrow\infty$ we have $\|t_n\chi_{A}\|=|t_n|\|\chi_A\|\rightarrow{0}$ as $n\rightarrow\infty$. Assuming additionally $E$ is order continuous and taking a sequence $(A_n)\subset\Sigma$ and $t\in\mathbb{R}\setminus{0}$ we conclude that  $\mu(A_n)\rightarrow{0}$ as $n\rightarrow\infty$ if and only if $\|t\chi_{A_n}\|=|t|\|\chi_{A_n}\|\rightarrow{0}$ as $n\rightarrow\infty$. Furthermore, for any $(f_n)\subset{E}$ and $f\in{E}$ such that $f_n\rightarrow{f}$ a.e. and $|f_n(t)|\leq|f(t)|$ for any $t\in{T}$, then $\|f_n-f\|\rightarrow{0}$ as $n\rightarrow\infty$ (for more details see \cite{BS,KPS}).
\end{exa}

\begin{exa}
	Let $L^{0}(T,X)$ be the set of all vector valued measurable functions on $T$ with $\sigma$-finite measure space $(T,\Sigma,\mu)$. Let $E(X)\subset L^0(T,X)$ be a K\"othe-Bochner space equipped with the norm $\Vert \cdot \Vert_{E(X)}=\|\|\cdot\|_X\|_E$ and $S_F(X)=\{s\in{L^0(T,X)}:s\textnormal{ is a simple function with a finite and positive support}\}$. Obviously $S_F(X)\subset E(X)$. Since $E$ is a Banach function space, it follows that the norm is monotone. Consequently, for any $f,g\in E(X)$ with $\|f(\cdot)\|_X\leq\|g(\cdot)\|_X$ a.e., we have $\|f\|_{E(x)}\leq\|g\|_{E(X)}$. Moreover, for any set $A\in\Sigma$ with $\mu(A)<\infty$ and for any sequence $(x_n)\subset{X}$ such that $\|x_n\|_X\rightarrow{0}$ as $n\rightarrow\infty$ we get $\|x_n\chi_{A}\|_{E(X)}=\|x_n\|_X\|\chi_A\|_E\rightarrow{0}$ as $n\rightarrow\infty$. Now, suppose that $E$ is order continuous. Then, taking $x\in{X}\setminus{0}$ and $(A_n)\subset{T}$ we obtain $\mu(A_n)\rightarrow{0}$ as $n\rightarrow\infty$ if and only if $\|x\chi_{A_n}\|_{E(X)}=\|x\|_X\|\chi_{A_n}\|_E\rightarrow{0}$ as $n\rightarrow\infty$. Finally, assuming that $f\in E(X)$ and $(f_n)\subset{E(X)}$ with $\|f_n(\cdot)-f(\cdot)\|_X\rightarrow{0}$ a.e. and $\|f_n(t)\|_X\leq\|f(t)\|_X$ for all $t\in{T}$, then by order continuity of $E$ we conclude that $\|f_n-f\|_{E(X)}\rightarrow{0}$ as $n\rightarrow\infty$ (for more details see \cite{megg}).
\end{exa}
\section{Applications to modular spaces}
In this section we show that a large class of modular spaces satisfies the requirements of Definition \ref{def32}.
First we recall a notion of modular.
\newline
Let $\mathbb{C}$, $\mathbb{R}$, $\mathbb{R}^+$ and $\mathbb{N}$ be the sets of complex, reals, nonnegative reals and positive integers, respectively. Let us denote by $(e_i)_{i=1}^n$ a standard basis in $\mathbb{R}^n$.
% semimodular space
Let $ X$ be a linear space over $\mathbb{K}$, where $ \mathbb{K} = \mathbb{R}$ or   $ \mathbb{K} = \mathbb{C}.$ 
A function $ \rho: X \rightarrow [0, + \infty]$ is called a \textit{semimodular} if there holds for arbitrary $ x, y \in X:$
\begin{itemize}
	\item[$(a)$] $\rho(dx) = 0$ for any $ d \geq 0$ implies $x=0$ and $\rho(0)=0;$
	\item[$(b)$] $\rho(dx) = \rho(x),$ for any $ d \in \mathbb{K}, $ $ |d|=1;$
	\item[$(c)$] $\rho(ax+by) \leq \rho(x) + \rho(y)$
	for any $ a,b \geq 0,$ $a +b =1.$
\end{itemize}
If we replace (c) by:

(c1)  there exists $s \in (0,1]$ such that $\rho(ax+by) \leq a^{1/s}\rho(x) + b^{1/s}\rho(y)$
	for any $ a,b \geq 0,$ $a^{1/s} +b^{1/s} =1$
\newline
then $\rho$ is called $s$-convex (convex if $s=1.)$
\newline 
Let $X_{\rho}= \{ x \in X: \rho(tf)<\infty: \hbox { for some } t \in \mathbb{R}\}.$ Then, $X_{\rho}$ is called a modular space. 

\begin{thm}
\label{increasing}
 Let $ X$ be a linear space over $\mathbb{K},$ $ \mathbb{K} = \mathbb{R}$ or   $ \mathbb{K} = \mathbb{C}.$ Fix $ n \geq 2$ and let $\rho_i$ be a semimodular defined on $X$ for any $i\in\{1,\dots,n-1\}$. Put $\rho=\max_{1\leq i\leq{n-1}}\{\rho_i\}.$
 Assume that $ f: \mathbb{R}^n \rightarrow [0, +\infty)$ is an F-norm such that for any  $ x= (x_1,x_2,\dots,x_{n}) \in (\mathbb{R}_+)^{n}$ and $ y=(y_1,y_2,\dots,y_{n}) \in (\mathbb{R}_+)^{n}$
 if $ x_j \leq y_j$ for $j=1,\dots,n,$ then
 \begin{equation}
 \label{crucial}
 f(x) \leq f(y).
 \end{equation}
 Let us define for $ x \in X_{\rho},$
 $$
 |x|_f = \inf_{k>0}\left\{ f\left(ke_1+\sum_{i=2}^n\rho_{i-1}\left(\frac{x}{k}\right)e_i \right) \right\}.
 $$
 Then, $ | \cdot |_f$ is an $F$-norm in $ X_{\rho}.$
\end{thm}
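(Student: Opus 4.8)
The plan is to verify the four defining properties (i)--(iv) of an $F$-norm from Definition \ref{def31a} directly for $|\cdot|_f$. Throughout I will use three structural facts: the scalar monotonicity of each semimodular, $\rho_i(ax)\le\rho_i(x)$ for $0\le a\le1$ (obtained from (c) by taking the second slot $0$, and extending to $\rho_i(ax)\le\rho_i(bx)$ for $0\le a\le b$); the monotonicity \eqref{crucial} of $f$ on $(\mathbb{R}_+)^n$; and the fact that $f$, as an $F$-norm on the finite-dimensional space $\mathbb{R}^n$, is continuous with $f(v)\to0$ iff $v\to0$. Writing $\Phi(x,k)=f\left(ke_1+\sum_{i=2}^n\rho_{i-1}\left(\frac{x}{k}\right)e_i\right)$, so that $|x|_f=\inf_{k>0}\Phi(x,k)$, a preliminary remark is that $|\cdot|_f$ is finite on $X_\rho$: if $\rho_i(t_0x)<\infty$ for all $i$, then for every $k\ge1/t_0$ all entries $\rho_{i-1}(x/k)$ are finite, so $\Phi(x,k)<\infty$.

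Properties (i)--(iii) I expect to be routine. For (ii), property (b) gives $\rho_{i-1}(ax/k)=\rho_{i-1}(x/k)$ when $|a|=1$, so the argument of $f$ is unchanged and $|ax|_f=|x|_f$. For (iii), given $\epsilon>0$ I would pick $k_1,k_2$ nearly realizing $|x|_f,|y|_f$, set $k=k_1+k_2$, and write $\frac{x+y}{k}=\frac{k_1}{k}\cdot\frac{x}{k_1}+\frac{k_2}{k}\cdot\frac{y}{k_2}$; applying (c) coordinatewise gives $\rho_{i-1}\left(\frac{x+y}{k}\right)\le\rho_{i-1}\left(\frac{x}{k_1}\right)+\rho_{i-1}\left(\frac{y}{k_2}\right)$, so that \eqref{crucial} together with the triangle inequality for $f$ yields $|x+y|_f\le\Phi(x,k_1)+\Phi(y,k_2)$, and $\epsilon\to0$ finishes. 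For the nontrivial direction of (i), if $|x|_f=0$ choose $k_m$ with $\Phi(x,k_m)\to0$; since $\Phi(x,k)\ge f(ke_1)$ and $\Phi(x,k)\ge f(\rho_{i-1}(x/k)e_i)$ by \eqref{crucial}, the $F$-norm behaviour of $f$ on scalar multiples of the fixed vectors $e_1,e_i$ forces $k_m\to0$ and $\rho_{i-1}(x/k_m)\to0$. Scalar monotonicity then gives $\rho_{i-1}(dx)=0$ for every $d\ge0$, and axiom (a) yields $x=0$.

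The continuity property (iv) is the main obstacle. I would split $|\lambda_nx_n-\lambda x|_f\le|\lambda_n(x_n-x)|_f+|(\lambda_n-\lambda)x|_f$ using (iii) and treat the terms separately. The first is purely algebraic: from (b) and scalar monotonicity one gets $|cy|_f\le|y|_f$ whenever $|c|\le1$, and the triangle inequality gives $|my|_f\le m|y|_f$ for integers $m$; taking an integer $m\ge\sup_n|\lambda_n|$ yields $|\lambda_n(x_n-x)|_f\le m\,|x_n-x|_f\to0$. The delicate term is $|\mu_nx|_f$ with $\mu_n:=\lambda_n-\lambda\to0$. Here I would choose the scale $k_n=\sqrt{|\mu_n|}$, so that, after using (b), $\Phi(\mu_nx,k_n)=f\!\left(\sqrt{|\mu_n|}\,e_1+\sum_{i=2}^n\rho_{i-1}(\sqrt{|\mu_n|}\,x)\,e_i\right)$. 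As $n\to\infty$ the first coordinate tends to $0$, and the remaining ones tend to $0$ \emph{provided} $\lim_{s\to0^+}\rho_{i-1}(sx)=0$; the $\mathbb{R}^n$-vector then tends to $0$ and continuity of $f$ gives $|\mu_nx|_f\to0$.

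The crux is therefore the left-continuity at zero of the semimodulars, $\lim_{s\to0^+}\rho_i(sx)=0$ for $x\in X_\rho$. I expect this to be the genuinely hard point, since it does \emph{not} follow from the bare axioms (a)--(c): for $\rho(x)=0$ if $x=0$ and $\rho(x)=1$ otherwise one verifies (a)--(c) while $\rho(sx)\equiv1$ for $x\neq0$, and then $|\cdot|_f$ fails (iv). One must therefore invoke convexity of the semimodular (property (c1) with $s=1$, giving $\rho_i(sx)\le s\,\rho_i(x)\to0$ once $\rho_i(x)<\infty$) or the corresponding standing assumption of the underlying framework; this is precisely the ingredient I would isolate and use at the step indicated above.
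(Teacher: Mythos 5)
Your verification of properties (i)--(iii) is correct and in substance the same as the paper's: finiteness on $X_\rho$, invariance under unimodular scalars via axiom (b), positivity via the monotonicity \eqref{crucial} combined with the scalar monotonicity $\rho_i(ax)\le\rho_i(x)$ for $|a|\le 1$, and the triangle inequality via near-optimal $k_1,k_2$ and the decomposition $\frac{x+y}{k_1+k_2}=\frac{k_1}{k_1+k_2}\cdot\frac{x}{k_1}+\frac{k_2}{k_1+k_2}\cdot\frac{y}{k_2}$. (For positivity the paper argues by a two-case lower bound on $\Phi(x,k)$ for $k<k_o$ and $k\ge k_o$ rather than by your sequential argument through the coincidence of the $f$-topology with the Euclidean topology on $\mathbb{R}^n$, but the ingredients are identical and both arguments are sound.)

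Where you genuinely diverge from the paper is property (iv), and there you have found a real gap rather than created one. The paper's proof stops immediately after the triangle inequality (``Hence, $|x+y|_f\le|x|_f+|y|_f$, as required.'') and never addresses condition (iv) of Definition \ref{def31a} at all. Your analysis shows this omission is not harmless: the paper's semimodular axioms (a)--(c) contain no continuity requirement, and the paper defines $X_\rho$ by finiteness of $\rho(tx)$ for some $t$ (not by $\lim_{t\to 0^+}\rho(tx)=0$, which is the convention that would rescue the statement). Hence your $0$--$1$ semimodular satisfies every hypothesis of the theorem, yet for $f$ the maximum norm one gets $|x|_f=1$ for all $x\neq 0$, so scalar multiplication is discontinuous and (iv) fails; the conclusion as stated is then false. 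Your isolation of $\lim_{s\to 0^+}\rho_i(sx)=0$ for $x\in X_\rho$ as the exact missing hypothesis --- automatic for convex or $s$-convex semimodulars, as in Theorem \ref{thm:increasing}, or built into Musielak's definition of $X_\rho$ --- is precisely the repair the theorem needs. In short: your (i)--(iii) match the paper, and your treatment of (iv) supplies and justifies a condition that the paper's statement requires but whose proof the paper silently skips.
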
 
\begin{proof}
Since $ x \in X_{\rho},$ there exists $u >0$ such that $ \rho_i(x/u) < \infty,$ for $ i =1,...,n-1.$ Hence, $ |x|_f < \infty.$
Obviously, $ \|0\|_f=0.$
Now we show that $ |x|_f > 0$ for $ x \neq 0.$
First assume that $ \rho(x) \in \{ 0,+ \infty\}.$ Since $ x \neq 0, $ $ \rho(x/k_o) = + \infty $ for some $ k_o >0.$
Notice that for $ k \geq k_o,$ 
$$
f\left(ke_1+\sum_{i=2}^n\rho_{i-1}\left(\frac{x}{k}\right)e_i \right)\geq f(k_o,0,...,0)  >0.
$$
If $k < k_o,$ then
$$
f\left(ke_1+\sum_{i=2}^n\rho_{i-1}\left(\frac{x}{k}\right)e_i \right)\geq f\left(\sum_{i=2}^n\rho_{i-1}\left(\frac{x}{k_o}\right)e_i \right) = + \infty.
$$
If there exists $k_o >0, $ such that $ 0 < \rho(x/k_o) < \infty,$ then for $k < k_o,$
$$
f\left(ke_1+\sum_{i=2}^n\rho_{i-1}\left(\frac{x}{k}\right)e_i \right)
$$
$$
\geq f\left(ke_1+\sum_{i=2}^n\rho_{i-1}\left(\frac{x}{k_o}\right)e_i \right) \rightarrow_k f\left(\sum_{i=2}^n\rho_{i-1}\left(\frac{x}{k_o}\right)e_i \right) >0.             
$$
If $ k \geq k_o,$ then
$$
f\left(ke_1+\sum_{i=2}^n\rho_{i-1}\left(\frac{x}{k_o}\right)e_i \right) \geq f(k_oe_1,0,...,0) >0.
$$
Hence, $ \|x\|_f >0.$
By defintion of modular, $ \|ax\|_f = \|x\|_f$ for $ a \in \mathbb{K}, $ $ |a|=1.$
Now we show that $ |x+y|_f \leq |x|_f + |y|_f.$ To do that, fix $ \epsilon > 0,$ $u>0$ and $ v>0$ such that
$$
f\left(ue_1+\sum_{i=2}^n\rho_{i-1}\left(\frac{x}{u}\right)e_i \right) \leq |x|_f + \epsilon 
$$ 
and 
$$
f\left(ve_1+\sum_{i=2}^n\rho_{i-1}\left(\frac{y}{v}\right)e_i \right) \leq |y|_f + \epsilon. 
$$ 
Notice that
$$
f\left((u+v)e_1+\sum_{i=2}^n\rho_{i-1}\left(\frac{x+y}{u+v}\right)e_i \right) 
$$
$$
= f\left((u+v)e_1+\sum_{i=2}^n \rho_{i-1}\left(\frac{ux}{(u+v)u} +\frac{vy}{(u+v)v}\right) e_i \right)
$$
$$
\leq f\left((u+v)e_1+\sum_{i=2}^n \left( \left(\rho_{i-1}\left(\frac{x}{u}\right) +\rho_{i-1}\left( \frac{y}{v}\right)\right) e_i \right) \right)
$$
$$
\leq f\left(ue_1+\sum_{i=2}^n\rho_{i-1}\left(\frac{x}{u}\right)e_i \right)+f\left(ve_1+\sum_{i=2}^n\rho_{i-1}\left(\frac{y}{v}\right)e_i \right)
$$
$$
\leq |x|_f + |y|_f +2\epsilon.
$$
Hence, $ |x+y|_f \leq |x|_f + |y|_f, $ as required.
\end{proof}

Now, applying Theorem \ref{increasing} we get the following theorem.

\begin{thm}
 \label{increasing1}
Let $ \rho_1$ and $f: \mathbb{R}^2 \rightarrow \mathbb{R}$ be as in Theorem \ref{increasing} and let $ \rho_1$ be a semimodular. Then, the function
$$
 |x|_f = \inf_{k>0} \{ f(ke_1 +\rho_1(x/k)e_2) \}
 $$
is an $F$-norm on $ X_{\rho_1}.$
\end{thm}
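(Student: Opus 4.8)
The plan is to verify the four defining properties (i)--(iv) of Definition \ref{def31a} for $|\cdot|_f$ directly on $X_{\rho_1}$, using only the semimodular axioms (a)--(c) for $\rho_1$, the monotonicity \eqref{crucial} together with the subadditivity of $f$, and the fact that, being an $F$-norm on the two-dimensional space $\mathbb{R}^2$, $f$ induces a Hausdorff vector topology and hence metrizes the usual topology of $\mathbb{R}^2$; in particular $f$ is continuous and $f(v_k)\to 0$ if and only if $v_k\to 0$ in $\mathbb{R}^2$. Two preliminary facts I would record first: applying (c) with the second argument equal to $0$ gives $\rho_1(sx)\le\rho_1(x)$ for $s\in[0,1]$, so $s\mapsto\rho_1(sx)$ is non-decreasing on $[0,\infty)$ and $k\mapsto\rho_1(x/k)$ is non-increasing; and since $x\in X_{\rho_1}$ there is $u>0$ with $\rho_1(x/u)<\infty$, whence $|x|_f\le f\!\left(ue_1+\rho_1(x/u)e_2\right)<\infty$, while $\rho_1(0)=0$ and $f(ke_1)\to 0$ as $k\to0^+$ give $|0|_f=0$.

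The substantive new point --- the reason this is not merely the case $n=2$ of Theorem \ref{increasing} --- is the strict positivity $|x|_f>0$ for $x\ne 0$. Since $\rho_1$ is only a semimodular it may be finite valued (for instance a norm), so the device of Theorem \ref{increasing}, which exhibits a $k_o$ with $\rho(x/k_o)=+\infty$, is unavailable; moreover a semimodular can satisfy $\rho_1(x)=0$ with $x\ne 0$. I would instead use axiom (a): as $x\ne 0$, not all $\rho_1(dx)$ vanish, so there is $d_0>0$ with $c_0:=\rho_1(d_0x)>0$. Put $K^\ast=1/d_0$. For $k\ge K^\ast$, \eqref{crucial} gives $f\!\left(ke_1+\rho_1(x/k)e_2\right)\ge f(K^\ast,0)>0$, and for $0<k\le K^\ast$ one has $1/k\ge d_0$, so $\rho_1(x/k)\ge\rho_1(d_0x)=c_0$ and hence $f\!\left(ke_1+\rho_1(x/k)e_2\right)\ge f(0,c_0)>0$; taking the infimum over $k$ yields $|x|_f\ge\min\{f(K^\ast,0),f(0,c_0)\}>0$. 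Property (ii) is immediate from (b), since $\rho_1(ax/k)=\rho_1(x/k)$ when $|a|=1$ leaves the argument of $f$ unchanged, and property (iii) follows exactly as in Theorem \ref{increasing}: for $u,v$ chosen so that $f(ue_1+\rho_1(x/u)e_2)\le|x|_f+\eps$ and $f(ve_1+\rho_1(y/v)e_2)\le|y|_f+\eps$, the estimate $\rho_1\!\left(\tfrac{x+y}{u+v}\right)\le\rho_1(x/u)+\rho_1(y/v)$ from (c), combined with \eqref{crucial} and the subadditivity of $f$, gives $|x+y|_f\le|x|_f+|y|_f$.

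I expect the scalar-continuity axiom (iv) --- which is not addressed at all in the proof of Theorem \ref{increasing} --- to be the main obstacle. Using the triangle inequality just proved, $|\lambda_n x_n-\lambda x|_f\le|\lambda_n(x_n-x)|_f+|(\lambda_n-\lambda)x|_f$, so it suffices to establish (A) $|\lambda_n z_n|_f\to 0$ whenever $|z_n|_f\to 0$ and $(\lambda_n)$ is bounded, say $|\lambda_n|\le C$ with $C\ge 1$, and (B) $|\mu_n x|_f\to 0$ for fixed $x$ when $\mu_n\to 0$. For (A) I would choose scales $k_n>0$ with $f\!\left(k_n,\rho_1(z_n/k_n)\right)\le|z_n|_f+1/n$; then $f(k_n,0)\le f(k_n,\rho_1(z_n/k_n))\to 0$ and $f(0,\rho_1(z_n/k_n))\le f(k_n,\rho_1(z_n/k_n))\to 0$ force $k_n\to 0$ and $\rho_1(z_n/k_n)\to 0$, and since $\rho_1\!\left(\lambda_n z_n/(C k_n)\right)\le\rho_1(z_n/k_n)$ by (b) and the scaling monotonicity, we get $|\lambda_n z_n|_f\le f\!\left(Ck_n,\rho_1(z_n/k_n)\right)\to f(0,0)=0$ by continuity of $f$. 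Case (B) is where genuine regularity of $\rho_1$ enters: choosing $k_n=|\mu_n|^{1/2}$ makes the first coordinate $k_n\to 0$ and, by (b), the second coordinate equal to $\rho_1(|\mu_n|^{1/2}x)$, so $|\mu_n x|_f\to 0$ precisely because $\rho_1(sx)\to 0$ as $s\to 0^+$. This last vanishing --- the continuity of the semimodular at the origin along rays --- is the one ingredient beyond (a)--(c), and it is exactly the property that a pathological finite-valued semimodular can fail; it is guaranteed in the modular function space framework of the paper, and I would make its use explicit at this point.
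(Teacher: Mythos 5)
Your proof is correct (modulo the one extra hypothesis you yourself flag), and it takes a genuinely different --- in fact strictly more complete --- route than the paper. The paper's own proof of Theorem \ref{increasing1} is the single line ``apply Theorem \ref{increasing} for $n=2$'', and the paper's proof of Theorem \ref{increasing} verifies only finiteness, definiteness, symmetry and the triangle inequality; axiom (iv) of Definition \ref{def31a} is never checked anywhere. So your treatment of (iv) --- the splitting $|\lambda_n x_n-\lambda x|_f\le|\lambda_n(x_n-x)|_f+|(\lambda_n-\lambda)x|_f$, part (A) exploiting that an $F$-norm on $\mathbb{R}^2$ metrizes the Euclidean topology so that near-optimal scales satisfy $k_n\to0$ and $\rho_1(z_n/k_n)\to0$, and part (B) reducing to ray-continuity of $\rho_1$ at the origin --- supplies precisely what the paper omits, and each step of it checks out. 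Moreover, your diagnosis at (B) is exact and the flagged hypothesis is not removable: with the paper's definition $X_\rho=\{x:\rho(tx)<\infty \text{ for some } t\}$, the function given by $\rho(0)=0$ and $\rho(x)=|x|+1$ for $x\ne0$ on $X=\mathbb{R}$ satisfies (a)--(c), gives $X_\rho=\mathbb{R}$, yet $\inf_{s>0}\rho(sx)\ge 1$ for $x\ne0$, whence $|\mu x|_f\ge f(0,1)>0$ for every $\mu\ne0$ and (iv) fails; conversely, if $\inf_{s>0}\rho_1(sx)=c>0$ for some $x\in X_{\rho_1}$ then $|\mu x|_f\ge f(0,c)$ for all $\mu\neq 0$, so your condition is necessary as well as sufficient. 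It holds automatically under the classical definition $X_\rho=\{x:\lim_{t\to0^+}\rho(tx)=0\}$ (Musielak) and in the modular function space setting, as you note; making it explicit turns a statement that is literally false under the paper's conventions into a true one.

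One small correction of attribution: your positivity argument, while tidier, is less novel than you suggest. The paper's proof of Theorem \ref{increasing} does not rely solely on exhibiting $k_o$ with $\rho(x/k_o)=+\infty$; its second case treats $0<\rho(x/k_o)<\infty$ and so already covers finite-valued semimodulars (the stated dichotomy is badly phrased --- if $\rho(x)=0$ with all values of $\rho$ along the ray finite, axiom (a) lands you in the second case anyway --- but the two cases together are exhaustive). Your merged bound $|x|_f\ge\min\{f(K^\ast,0),f(0,c_0)\}$, with $c_0=\rho_1(d_0x)>0$ from axiom (a), is a cleaner simultaneous version of the same idea rather than the repair of a missing case; do also note that when $c_0=+\infty$ the expression $f(0,c_0)$ is undefined and those $k\le K^\ast$ should instead be discarded as contributing $+\infty$ to the infimum. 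In sum: the paper's route buys brevity by quoting Theorem \ref{increasing}, while your route buys a self-contained argument that exposes, and correctly patches, the genuine gap at axiom (iv) inherited from that theorem.
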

\begin{proof}
It is necessary to apply Theorem \ref{increasing} for $n=2$ 
\end{proof}
\begin{rem}
\label{Luxemburg}
Observe that if $f$ is equal to the maximum norm on $ \mathbb{R}^2,$ $ \rho$ is a semimodular and $ x \in X_{\rho},$ then 
$$ 
| x |_f = \inf \{u>0: \rho(x/u) \leq u \},
$$
which means that $ | \cdot |_f$ coincides with the classical Luxemburg F-norm on $ X_{\rho}.$
\newline
Indeed, let $f(u,v) = max \{|u|, |v|\}.$
Let 
$$
|x|_L = inf \{ u>0: \rho(x/u) \leq u\}.
$$
We show that $ |x|_f = |x|_L.$
Notice that for any $ \epsilon >0, $ 
$$
\rho\left(\frac{x}{|x|_L + \epsilon}\right) \leq |x|_L + \epsilon.
$$
Hence, $ f(|x|_L+\epsilon, \rho(\frac{x}{|x|_L + \epsilon})) = |x|_L + \epsilon$, which shows that $ |x|_f \leq |x|_L.$ If $ |x|_f < |x|_L$ for some
$x \in X_{\rho},$ then there exist $ u> 0 $ and $ \delta >0$ such that
$$
f(u, \rho(x/u)) < |x|_L - \delta.
$$
This shows that $ |u|\leq  \|x\|_L - \delta$ and consequently
$$
\rho\left(\frac{x}{|x|_L -\delta}\right) \leq \rho\left(\frac{x}{u}\right) < |x|_L - \delta,
$$
which leads to a contradiction with definition of $ | \cdot |_L.$
\end{rem}
It is worth noticing that 
recently, in \cite{CieLew-Semi} the similar problems, devoted to $s$-norms generated by $s$-convex semimodulars, have been investigated. We present all details of them for sake of completeness and reader’s convenience. Namely, the following results were established.

\begin{thm}{\cite{CieLew-Semi}}\label{thm:increasing}
	Let $ X$ be a linear space over $\mathbb{K},$ $ \mathbb{K} = \mathbb{R}$ or   $ \mathbb{K} = \mathbb{C}$ and let $ s \in (0,1].$ Fix $ n \geq 2$ and let $\rho_i$ be a $s$-convex semimodular defined on $X$ for any $i\in\{1,\dots,n-1\}$. Put $\rho=\max_{1\leq i\leq{n-1}}\{\rho_i\}.$
	Assume that $ f: \mathbb{R}^n \rightarrow [0, +\infty)$ is a convex function such that $f(x)=0$ if and only if $ x=0.$ Assume furthermore that for any  $ x= (1,x_2,\dots,x_{n}) \in (\mathbb{R}_+)^{n}$ and $ y=(1,y_2,\dots,y_{n}) \in (\mathbb{R}_+)^{n}$
	if $ x_j \leq y_j$ for $j=2,\dots,n,$ then
	\begin{equation}
	f(x) \leq f(y).
	\end{equation}
	Let us define for $ x \in X_{\rho},$
	$$
	\|x\|_f = \inf_{k>0}\left\{k f\left(e_1+\sum_{i=2}^n\rho_{i-1}\left(\frac{x}{k^{1/s}}\right)e_i \right) \right\}.
	$$
	Then, $ \| \cdot \|_f$ is an $s$-norm (norm if $s=1)$ in $ X_{\rho}.$
\end{thm}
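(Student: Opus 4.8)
The plan is to verify the three defining properties of an $s$-norm for $\|\cdot\|_f$: definiteness ($\|x\|_f=0$ iff $x=0$), $s$-homogeneity ($\|\lambda x\|_f=|\lambda|^s\|x\|_f$), and subadditivity ($\|x+y\|_f\le\|x\|_f+\|y\|_f$). Once these hold, continuity of the vector operations is automatic (since $\|\lambda_n x_n-\lambda x\|_f\le|\lambda_n|^s\|x_n-x\|_f+|\lambda_n-\lambda|^s\|x\|_f$), so no separate argument of the delicate type needed for the $F$-norm in Theorem~\ref{increasing} is required here. Write $\Phi(k,z)=k\,f\big(e_1+\sum_{i=2}^n\rho_{i-1}(z/k^{1/s})e_i\big)$, so $\|z\|_f=\inf_{k>0}\Phi(k,z)$. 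Finiteness on $X_\rho$ is immediate: choosing $t$ with $\rho(x/t)<\infty$ and $k=t^s$ makes $\Phi(k,x)<\infty$. Two facts drive everything: first, $s$-convexity of each $\rho_i$ (applied with a zero second argument) gives $\rho_i(tz)\le t^{1/s}\rho_i(z)$ for $t\in[0,1]$ and $\rho_i(tz)\ge t^{1/s}\rho_i(z)$ for $t\ge1$, so $r\mapsto\rho_i(z/r^{1/s})$ is nonincreasing; second, $f$ is convex, nonnegative, vanishes only at $0$ (hence is continuous and grows at infinity), and is coordinatewise nondecreasing on the slice $\{x_1=1\}$.

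I would dispatch $s$-homogeneity first. For $\lambda\ne0$ write $\lambda=|\lambda|\omega$ with $|\omega|=1$ and substitute $k=|\lambda|^s m$; then $\lambda x/k^{1/s}=\omega x/m^{1/s}$, and axiom (b) gives $\rho_{i-1}(\omega x/m^{1/s})=\rho_{i-1}(x/m^{1/s})$, whence $\Phi(k,\lambda x)=|\lambda|^s\Phi(m,x)$ and $\|\lambda x\|_f=|\lambda|^s\|x\|_f$. The case $\lambda=0$ is subsumed by definiteness.

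For definiteness, $\|0\|_f=0$ follows because $\rho_{i-1}(0)=0$ gives $\Phi(k,0)=k\,f(e_1)\to0$ as $k\to0$. Suppose $x\ne0$. By monotonicity $f(e_1+\cdots)\ge f(e_1)>0$, so $\Phi(k,x)\ge k_0 f(e_1)>0$ for $k\ge k_0$; the issue is small $k$. Using axiom (a), fix $k_0>0$ and an index $m$ with $\delta:=\rho_m(x/k_0^{1/s})>0$. For $k<k_0$ the lower scaling yields $\rho_m(x/k^{1/s})\ge(k_0/k)^{1/s^2}\delta$, so the $(m{+}1)$-st coordinate of the argument of $f$ blows up as $k\to0$. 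Since $f$ is convex, nonnegative and definite it must grow at least linearly along that coordinate (otherwise convexity and continuity would force $f$ to vanish at the nonzero point $e_{m+1}$), so $\Phi(k,x)\gtrsim k\,(k_0/k)^{1/s^2}=c\,k^{\,1-1/s^2}$, which stays bounded away from $0$ as $k\to0$ precisely because $1-1/s^2\le0$. Hence $\inf_k\Phi(k,x)>0$. This growth estimate is one of the two delicate points.

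Subadditivity is the crux. Fix $\eps>0$ and pick $u,v>0$ with $\Phi(u,x)\le\|x\|_f+\eps$ and $\Phi(v,y)\le\|y\|_f+\eps$; set $A=x/u^{1/s}$, $B=y/v^{1/s}$, $A'=e_1+\sum_{i\ge2}\rho_{i-1}(A)e_i$, $B'=e_1+\sum_{i\ge2}\rho_{i-1}(B)e_i$. The correct combining parameter (reducing to $u+v$ when $s=1$) is $k=(u^{1/s^2}+v^{1/s^2})^{s^2}$; set $\xi=(u/k)^{1/s^2}$, $\eta=(v/k)^{1/s^2}$, so that $\xi+\eta=1$ and $\tfrac{x+y}{k^{1/s}}=\xi^s A+\eta^s B$. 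Applying $s$-convexity (c1) with $a=\xi^s$, $b=\eta^s$ bounds each coordinate, $\rho_{i-1}\big(\tfrac{x+y}{k^{1/s}}\big)\le\xi\rho_{i-1}(A)+\eta\rho_{i-1}(B)$; monotonicity of $f$ on $\{x_1=1\}$ followed by convexity of $f$ (a genuine two-term convex combination, since $\xi+\eta=1$) then gives $f\big(e_1+\sum_{i\ge2}\rho_{i-1}(\tfrac{x+y}{k^{1/s}})e_i\big)\le\xi f(A')+\eta f(B')$. Multiplying by $k$ and using $f\ge0$ with $k\xi=k^{1-1/s^2}u^{1/s^2}\le u$ and $k\eta\le v$ (valid since $k\ge u$, $k\ge v$ and $1-1/s^2\le0$) yields $\Phi(k,x+y)\le u f(A')+v f(B')=\Phi(u,x)+\Phi(v,y)\le\|x\|_f+\|y\|_f+2\eps$; letting $\eps\to0$ finishes. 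I expect the main obstacle to be pinning down this combining exponent $k=(u^{1/s^2}+v^{1/s^2})^{s^2}$ so that the $s$-convexity of the $\rho_i$ and the convexity of $f$ mesh, the inequalities $k\xi\le u$, $k\eta\le v$ being exactly what the hypothesis $s\le1$ buys.
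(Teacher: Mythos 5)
Your proof is correct, but note that the paper itself contains no proof of this statement: Theorem \ref{thm:increasing} is imported verbatim from \cite{CieLew-Semi}, so the only argument in the paper against which you can be compared is the proof of the $F$-norm analogue, Theorem \ref{increasing}. Your argument is precisely the $s$-convex upgrade of that Amemiya-type proof, and the two places where you depart from it are exactly where departures are forced. (i) Triangle inequality: the paper combines near-optimal parameters $u,v$ via $k=u+v$, using axiom (c) with weights $u/(u+v)$, $v/(u+v)$ and subadditivity of the $F$-norm $f$; your $k=(u^{1/s^2}+v^{1/s^2})^{s^2}$ with $\xi=(u/k)^{1/s^2}$, $\eta=(v/k)^{1/s^2}$, $\xi+\eta=1$, is the right generalization: it makes (c1) applicable with $a=\xi^s$, $b=\eta^s$, replaces subadditivity of $f$ by convexity (the hypothesis available here), and the inequalities $k\xi\le u$, $k\eta\le v$ --- valid exactly because $s\le 1$ --- collapse to the equalities $k\xi=u$, $k\eta=v$ when $s=1$, recovering the paper's computation. (ii) Positivity: in Theorem \ref{increasing} the parameter $k$ sits inside $f$ as the coordinate $ke_1$, so $f(ke_1+\cdots)\ge f(k_0e_1)>0$ handles all $k\ge k_0$ and no growth estimate is ever needed; here $k$ multiplies $f$ from outside, so as $k\to 0$ one must show the value of $f$ blows up at rate at least $1/k$, and your argument --- the asymptotic slope of $g(t)=f(e_1+te_{m+1})$ must be positive, since otherwise $g$ is constant and then convexity and continuity of the finite convex function $f$ give $f(e_{m+1})=\lim_{t\to\infty} f(\tfrac1t e_1+e_{m+1})\le\lim_{t\to\infty}\tfrac1t g(t)=0$, contradicting definiteness; then $k\,g\bigl((k_0/k)^{1/s^2}\delta\bigr)\gtrsim k^{1-1/s^2}$ stays bounded below because $1-1/s^2\le 0$ --- is sound and is the genuinely new ingredient relative to the paper. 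Two harmless glosses you should patch: treat separately the case $\delta=\rho_m(x/k_0^{1/s})=+\infty$ (then $\Phi(k,x)=+\infty$ for all $k\le k_0$ under the paper's convention, used in the proof of Theorem \ref{increasing}, of setting $f=+\infty$ when a coordinate is infinite), and the linear lower bound on $g$ really reads $g(t)\ge ct-C$, the additive constant contributing only $-Ck\to 0$ to your estimate.
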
 

\begin{thm}{\cite{CieLew-Semi}}
	Let $ \rho_1$ and $f: \mathbb{R}^2 \rightarrow \mathbb{R}$ be as in Theorem \ref{thm:increasing} and let $ \rho_1$ be an $s$-convex semimodular. Then, the function
	$$
	\|x\|_f = \inf_{k>0} \{k f(1, \rho_1(x/k^{1/s})) \}
	$$
	is an $s$-convex norm (norm if $s=1)$ on $ X_{\rho_1}.$
\end{thm}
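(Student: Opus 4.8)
The plan is to obtain this statement as a direct specialization of Theorem \ref{thm:increasing} to the case $n=2$, exactly as Theorem \ref{increasing1} was derived from Theorem \ref{increasing}. First I would check that the present hypotheses are precisely those required to invoke the earlier theorem at $n=2$: we are handed a single $s$-convex semimodular $\rho_1$ on $X$, so that $\rho = \max_{1\le i\le 1}\{\rho_i\} = \rho_1$ and hence $X_\rho = X_{\rho_1}$, and we are handed $f:\mathbb{R}^2\to\mathbb{R}$ convex with $f(x)=0$ if and only if $x=0$ and monotone on the slices $(1,x_2)\le(1,y_2)$ in the sense of the displayed inequality of Theorem \ref{thm:increasing}. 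These are exactly the assumptions of that theorem when $n=2$.

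Next I would observe that for $n=2$ the vector appearing inside $f$ collapses to a single coordinate: the sum $\sum_{i=2}^{2}\rho_{i-1}(x/k^{1/s})e_i$ reduces to the single term $\rho_1(x/k^{1/s})e_2$, so that
$$
e_1 + \sum_{i=2}^{2}\rho_{i-1}\!\left(\frac{x}{k^{1/s}}\right)e_i = e_1 + \rho_1\!\left(\frac{x}{k^{1/s}}\right)e_2 = \left(1,\ \rho_1\!\left(\frac{x}{k^{1/s}}\right)\right).
$$
Substituting this identification into the functional supplied by Theorem \ref{thm:increasing} yields
$$
\|x\|_f = \inf_{k>0}\left\{k\, f\!\left(1,\ \rho_1\!\left(\frac{x}{k^{1/s}}\right)\right)\right\},
$$
which is precisely the expression in the statement.

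Therefore Theorem \ref{thm:increasing} applies verbatim and gives that $\|\cdot\|_f$ is an $s$-norm on $X_{\rho_1}$ (a norm when $s=1$), which completes the proof. Since the argument is a pure reduction to an already established result, I do not anticipate any genuine analytic obstacle; the only point that warrants care is the index bookkeeping, namely confirming that the single-semimodular hypothesis is literally the $n=2$ instance and that the underlying modular space $X_\rho$ coincides with $X_{\rho_1}$. This verification is routine and carries no difficulty.
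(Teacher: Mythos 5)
Your proposal is correct and matches the paper's approach: just as the paper proves Theorem \ref{increasing1} by the one-line reduction ``apply Theorem \ref{increasing} for $n=2$,'' this statement (quoted from \cite{CieLew-Semi}) is obtained by specializing Theorem \ref{thm:increasing} to $n=2$, with $\rho=\rho_1$, $X_\rho=X_{\rho_1}$, and the vector inside $f$ collapsing to $\bigl(1,\rho_1(x/k^{1/s})\bigr)$ exactly as you verify. No gaps.
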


It is worth recalling one more result that was presented in \cite{CieLew-Semi} and comparing with Remark \ref{Luxemburg}. 
\begin{rem}{\cite{CieLew-Semi}}
	Observe that if $f$ is equal to the maximum norm on $ \mathbb{R}^2,$ $ \rho$ is a convex semimodular and $ x \in X_{\rho},$ then 
	$$ 
	\| x \|_f = \inf \{u>0: \rho(x/u) \leq 1 \},
	$$
	which means that $ \| \cdot \|_f$ coincides with the classical Luxemburg norm on $ X_{\rho}.$
	If $f$ is equal to the $l_1$-norm on $ \mathbb{R}^2$ and $ \rho$ is a convex semimodular then $ \| \cdot \|_f$ is equal to the classical Orlicz-Amemiya norm on $ X_{\rho},$, which shows that the notion 
	of $ \| \cdot \|_f$ is a natural generalization of two classical norms 
	considered in semimodular spaces.
	Moreover, if $ f$ is the $l_p$-norm on $ \mathbb{R}^2, $ $1 < p < \infty$ and $ \rho$ is a convex semimodular then $ \| \cdot \|_f$ is equal to the $p$-Orlicz-Amemiya norm on $ X_{\rho}$ (see \cite{CuHuWi}).
\end{rem}
Now we state the following
\begin{thm}
\label{final}
Let $W$ be a Banach space and let $ (T, \Sigma, \mu) $ be a measure space with $ \sigma$-finite measure $\mu.$ Let $ L_o(T)$ denote the space af all $\mu$-measurable functions
going from $T$ into $W.$ Let $ \rho$ be a modular defined on $L_o(T)$ and let $X_{\rho}$ be a modular space.
Assume that $\rho$ satisfies the requirements of Definition \ref{def32} (instead of $|\cdot|_F.)$ Then, $(X_{\rho},|\cdot |_f)$ is an admissible F-space for any $F$ norm $|\cdot |_f$ given
in Theorem \ref{increasing1} and Theorem \ref{thm:increasing}. 
\end{thm}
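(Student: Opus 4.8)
The plan is to derive the statement from the main result Theorem \ref{thm1}: it suffices to check that $(X_\rho,|\cdot|_f)$ is an $F$-admissible $F$-space in the sense of Definition \ref{def32}, since then admissibility is automatic. By Theorem \ref{increasing1} --- more generally Theorem \ref{increasing}, and in the $s$-convex case Theorem \ref{thm:increasing} --- the functional $|\cdot|_f$ is already an $F$-norm (every $s$-norm being an $F$-norm), so the work reduces to two tasks: establishing completeness of $(X_\rho,|\cdot|_f)$, so that it is Frech\'et, and verifying conditions a--f of Definition \ref{def32}, using that the underlying semimodular(s) $\rho_i$, and hence $\rho=\max_i\rho_i$, satisfy the corresponding requirements.

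The engine of the argument is a characterization of null sequences that I would isolate as a lemma: for $x_m\in X_\rho$,
\[
|x_m|_f\to 0\quad\Longleftrightarrow\quad \rho\!\left(\tfrac{x_m}{k}\right)\to 0\ \text{ for every } k>0 .
\]
For $\Leftarrow$, given $\eps>0$ I would use the continuity of $f$ at the origin to pick $\delta,\eta>0$ with $f(\delta,\eta,\dots,\eta)<\eps$, and then bound $|x_m|_f\le f\big(\delta e_1+\sum_{i=2}^n\rho_{i-1}(x_m/\delta)e_i\big)\le f(\delta,\eta,\dots,\eta)$ for large $m$ via the monotonicity \eqref{crucial} of $f$. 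For $\Rightarrow$, I would take near-optimal scalings $k_m$ with $f\big(k_me_1+\sum_i\rho_{i-1}(x_m/k_m)e_i\big)\to 0$; then \eqref{crucial} forces $f(k_me_1)\to 0$, hence $k_m\to 0$, and the scaling monotonicity $\rho(\lambda x)\le\rho(\mu x)$ for $0\le\lambda\le\mu$ (immediate from axiom (c) and $\rho(0)=0$) together with the positivity of $f$ off the origin contradicts any $\rho_{i-1}(x_m/k)$ staying bounded away from $0$. For the $s$-norm of Theorem \ref{thm:increasing} the outer factor $k$ makes this last step fail as stated, and I would instead use $s$-convexity, $\rho(\lambda y)\le\lambda^{1/s}\rho(y)$ for $\lambda\le 1$, to deduce $\rho(x_m/k)\to 0$.

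Granting this lemma, conditions a--f follow in a routine way. Conditions a and b are immediate (a is the definition of $X_\rho\subset L_o(T)$, and for b one notes $\rho(s/t)<\infty$ for a finitely supported simple $s$ gives $|s|_f<\infty$). Condition c follows by applying the monotonicity of each $\rho_i$ coordinatewise and then \eqref{crucial}: $\|f(t)\|\le\|g(t)\|$ gives $\rho_{i-1}(f/k)\le\rho_{i-1}(g/k)$ for all $k$, whence $f\in X_\rho$ and $|f|_f\le|g|_f$. Conditions d, e, f are each obtained by feeding the corresponding property of $\rho$ into the lemma: for d apply property d of $\rho$ to the rescaled sequence $\{w_n/k\}$; for e use property e of $\rho$ in both directions (taking $k=1$ for the reverse implication); for f set $g_n=f_n/k$ and $g=f/k$, observe $g_n\to g$ a.e.\ with $\|g_n(t)\|\le\|g(t)\|$, and apply property f of $\rho$ to get $\rho((f_n-f)/k)\to 0$ for every $k$.

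The one step that is not mere transcription --- and thus the main obstacle --- is completeness. Here I would first show that a $|\cdot|_f$-Cauchy sequence is Cauchy in measure: otherwise, using the already verified conditions c and e, sets $A_j$ with $\mu(A_j)$ bounded below would satisfy $|w\chi(A_j)|_f\to 0$, contradicting e. By $\sigma$-finiteness one extracts a subsequence converging $\mu$-a.e.\ to some $x$, and the order-continuity condition f (through the lemma) should upgrade this to $|\cdot|_f$-convergence. The genuinely delicate point is that identifying the limit inside $|\cdot|_f$ needs a lower-semicontinuity (Fatou-type) property of $\rho$; if this is part of the standing assumptions on the modular space $X_\rho$ --- as is typical for complete modular function spaces --- then completeness is immediate and the proof closes by invoking Theorem \ref{thm1}; otherwise it has to be secured separately before that invocation.
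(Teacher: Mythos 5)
Your proposal is correct and takes essentially the same route as the paper: reduce Theorem \ref{final} to Theorem \ref{thm1} by checking the conditions of Definition \ref{def32} for $(X_{\rho},|\cdot|_f)$, with every verification passing through the equivalence $|x_m|_f\rightarrow 0$ if and only if $\rho(x_m/k)\rightarrow 0$ for all $k>0$. The difference is one of rigor, and it favors you: the paper's entire proof consists of what you call the ``routine transcription'' part --- properties d, e, f of $\rho$ are converted into properties d, e, f of $|\cdot|_f$ with the phrase ``by definition of $|\cdot|_f$'', i.e.\ your key lemma is used implicitly but never stated or proved, and conditions a--c are not discussed at all. Your sketch of the lemma (monotonicity \eqref{crucial}, near-optimal scalings $k_m\rightarrow 0$, and $\rho(\lambda x)\leq\rho(x)$ for $0\leq\lambda\leq 1$) is sound; in the $s$-convex case of Theorem \ref{thm:increasing} your appeal to $\rho(\lambda y)\leq\lambda^{1/s}\rho(y)$ needs one extra ingredient you do not spell out, namely the at-least-linear growth of the convex function $f$ along the coordinate rays, which is what bounds $k_m\rho_{i-1}(x_m/k_m^{1/s})$ before the scaling inequality can be applied --- but this does go through. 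Finally, the completeness issue you isolate is a genuine gap in the paper's own proof, not only a difficulty of yours: Definition \ref{def32} requires a Frech\'et space, completeness is really used on the way to Theorem \ref{thm1} (Lemma \ref{lem34} takes limits of $P_K(w_n)$ in $X$), and the paper's proof of Theorem \ref{final} never verifies that $(X_{\rho},|\cdot|_f)$ is complete; as you say, this requires a Fatou-type lower semicontinuity of $\rho$ or must be read into the hypothesis that $\rho$ ``satisfies the requirements of Definition \ref{def32}''.
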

\begin{proof}
Notice that if $ \{w_n\} \subset W,$ $ \|w_n\| \rightarrow 0,$ and $ A \in \Sigma, $ with $ \mu(A)<\infty, $ then for any $u>0,$  $ \rho((w_n \chi(A))/u) \rightarrow 0.$ By definition of 
$ | \cdot |_f$ this shows that $ |w_n\chi(A)|_f \rightarrow 0.$
Analogously if $ w \in W \setminus \{0\} $ and $\{A_n \} \subset \Sigma,$ then for any $ u>0$ $\rho((w\chi(A_n))/u) \rightarrow 0$ if and only if $\mu(A_n) \rightarrow 0$ and again by definition of $ |\cdot |_f,$ the same condition is satisfied by $ |\cdot |_f.$
\newline
If $ x \in X,$ $ \{ x_n\} \subset X,$ $x_n \rightarrow x$ $\mu$-almost everywhere and $ \|x_n(t)\| \leq \|x(t)\|$ for any $t \in T, $ then for any $ u>0,$  
$\rho((x_n-x)/u) \rightarrow 0, $ which shows that $|x_n-x|_f \rightarrow 0$ by definition of $|\cdot |_f.$
This shows that the assumptions of Theorem \ref{thm1} are satisfied by $(X_{\rho} | \cdot |_f).$
\end{proof} 
\begin{rem}
\label{examples}
Typical examples of modulars satisfying the requirements of Theorem \ref{final} are vector-valued Musielak-Orlicz spaces determined by $\sigma$-finite measure spaces and modular function spaces (see
\cite{Koz}).  It is clear that in general these spaces are not locally convex, (see e.g. \cite{Koz}, 
Theorem 3.4.1, p. 77). Also Theorem \ref{thm1} generalizes the main result (\cite{CL}) concerning modular function spaces, because to apply  Theorem \ref{thm1} to modular function spaces, the $\Delta_2$-condition is not needed. 
\end{rem}

\section{metric approximation property}

In this section we discuss criteria which guarantee that a rearrangement invariant Banach function space possesses metric approximation property. First, let us mention some basic notions and definitions which are used in our further investigation. Recall that an operator $T$ is said to be \textit{approximable} if it is the limit in the operator norm of the sequence of the finite rank operators. We say that a Banach space $X$ has the \textit{metric approximation property} if for any $\epsilon>0$ and any compact subset $K$ in $X$ there exists a finite rank operator $T$ from $X$ into $X$ such that $\norm{T}{}{}\leq{1}$ and $\norm{x-Tx}{X}{}<\epsilon$ for any $x\in{K}$.

We define as usual by $\mu$ the Lebesgue measure on $I=[0,\alpha)$, where $\alpha =1$ or $\alpha =\infty$, and by $L^{0}$ the set of all (equivalence classes of) extended real valued Lebesgue measurable functions on $I$. We use the notation $A^{c}=I\backslash A$ for any measurable set $A$. A Banach lattice $(E,\Vert \cdot \Vert _{E})$ is called a \textit{Banach function space} (or a \textit{K\"othe space}) if it is a sublattice of $L^{0}$ satisfying the following conditions
\begin{itemize}
	\item[(1)] If $x\in L^0$, $y\in E$ and $|x|\leq|y|$ a.e., then $x\in E$ and $\|x\|_E\leq\|y\|_E$.
	\item[(2)] There exists a strictly positive $x\in E$.
\end{itemize}
For simplicity of our notation we use the symbol $E^{+}={\{x \in E:x \ge 0\}}$. Recall that an element $x\in E$ is said to be a \textit{point of order continuity} if for any sequence $(x_{n})\subset{}E^+$ such that $x_{n}\leq \left\vert x\right\vert 
$ and $x_{n}\rightarrow 0$ a.e. we have $\left\Vert x_{n}\right\Vert
_{E}\rightarrow 0.$ Given a Banach function space $E$ is called \textit{order continuous 
}(shortly $E\in \left( OC\right) $) if every element $x\in{}E$ is a point of order continuity (see \cite{LinTza}). A Banach function space $E$ has the \textit{Fatou property} if for any $\left( x_{n}\right)\subset{}E^+$, $\sup_{n\in \mathbb{N}}\Vert x_{n}\Vert
_{E}<\infty$ and $x_{n}\uparrow x\in L^{0}$, then we get $x\in E$ and $\Vert x_{n}\Vert _{E}\uparrow\Vert x\Vert
_{E}$. Unless it is said otherwise, it is assumed that a Banach function space $E$ has the Fatou property. We denote the \textit{distribution function} for any function $x\in L^{0}$ by 
\begin{equation*}
d_{x}(\lambda) =\mu\left\{ s\in [ 0,\alpha) :\left\vert x\left(s\right) \right\vert >\lambda \right\},\qquad\lambda \geq 0.
\end{equation*}
For any element $x\in L^{0}$ we define its \textit{decreasing rearrangement} as follows
\begin{equation*}
x^{\ast }\left( t\right) =\inf \left\{ \lambda >0:d_{x}\left( \lambda
\right) \leq t\right\}, \text{ \ \ } t\geq 0.
\end{equation*}
Additionally, we employ the convention $x^{*}(\infty)=\lim_{t\rightarrow\infty}x^{*}(t)$ if $\alpha=\infty$ and $x^*(\infty)=0$ if $\alpha=1$. We denote for any function $x\in L^{0}$ the \textit{maximal function} of $x^{\ast }$ by 
\begin{equation*}
x^{\ast \ast }(t)=\frac{1}{t}\int_{0}^{t}x^{\ast }(s)ds.
\end{equation*}
For any $x\in L^{0}$, it is well known that $x^{\ast }\leq x^{\ast \ast },$ $x^{\ast \ast }$ is decreasing, continuous and subadditive. Furthermore, recall that two functions $x,y\in{L^0}$ are called \textit{equimeasurable} (shortly $x\sim y$) if $d_x=d_y$. Given a Banach function space $(E,\Vert \cdot \Vert_{E}) $ is called \textit{symmetric} or \textit{rearrangement invariant} (r.i. for short) if whenever $x\in L^{0}$ and $y\in E$ such that $x \sim y,$ then $x\in E$ and $\Vert x\Vert_{E}=\Vert y\Vert _{E}$. For a symmetric space $E$ we define $\phi_{E}$ its \textit{fundamental function} given by $\phi_{E}(t)=\Vert\chi_{(0,t)}\Vert_{E}$ for any $t\in [0,\alpha)$ (see \cite{BS}). Given $x,y\in{}L^{1}+L^{\infty }$ we define the \textit{Hardy-Littlewood-P\'olya relation} $\prec$ by 
\begin{equation*}
x\prec y\Leftrightarrow x^{\ast \ast }(t)\leq y^{\ast \ast }(t)\text{ for
	all }t>0.\text{ }
\end{equation*}

% $K$-monotonicity
Now, let us recall that a symmetric space $E$ is said to be $K$-\textit{monotone} (shortly $E\in(KM)$) if for any $x\in L^{1}+L^{\infty}$ and $y\in E$ with $x\prec y,$ then $x\in E$ and $\Vert x\Vert_{E}\leq \Vert y\Vert _{E}.$ It is commonly known that a symmetric space is $K$-monotone if and only if $E$ is exact
interpolation space between $L^{1}$ and $L^{\infty }.$ Let us also recall that a symmetric space $E$ equipped with an order continuous norm or with the Fatou property is $K$-monotone. Given a Banach function space $E$ is said to be \textit{reflexive} if $E$ and its associate space $E'$ are order continuous. We refer the reader for more details to see \cite{BS,KPS}.

Now, we present the main theorem of this section.

\begin{thm}\label{thm:approx:1}
	Let $E$ be an order continuous symmetric space on $I$. Then, $E$ has the metric approximation property. 
\end{thm}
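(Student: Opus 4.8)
The plan is to exhibit, for each $n\in\mathbb{N}$, a single finite-rank averaging operator of norm at most one and then to show that these operators converge to the identity uniformly on compact sets; this is exactly what the metric approximation property demands. Concretely, for $I=[0,1)$ I would partition $I$ into the $2^{n}$ dyadic intervals of length $2^{-n}$, and for $I=[0,\infty)$ I would partition $[0,2^{n})$ into dyadic intervals of length $2^{-n}$, discarding the tail $[2^{n},\infty)$. Writing $\mathcal D_n$ for this finite family of intervals, I set
$$
P_n f=\sum_{J\in\mathcal D_n}\Big(\frac{1}{\mu(J)}\int_J f\,d\mu\Big)\chi_J .
$$
Each $P_n$ is linear of finite rank (its range is spanned by the finitely many $\chi_J$) and is the linear analogue of the operators $P_K$ from Theorem \ref{thm31}.

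The first step is to check that $\|P_n\|_{E\to E}\le 1$. A direct computation shows that $P_n$ is a contraction on $L^{1}$ (since $\sum_J|\int_J f|\le\int|f|$) and on $L^{\infty}$ (an average is bounded by the supremum), so $P_n$ is substochastic on $L^{1}+L^{\infty}$. Since $E$ is order continuous it is $K$-monotone, equivalently an exact interpolation space between $L^{1}$ and $L^{\infty}$ (both facts are recalled above), and therefore $\|P_n f\|_E\le\|f\|_E$ for every $f\in E$. This gives the required bound $\|P_n\|\le 1$.

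The second, and main, step is to establish $\|P_n f-f\|_E\to 0$ for every $f\in E$. I would first observe that $P_n$ fixes dyadic step functions: if $g$ is constant on the dyadic intervals of some level $m$ and supported in $[0,2^{m})$, then $P_n g=g$ for all $n\ge m$. Next I would prove that such dyadic step functions are dense in $E$. Because $E$ is order continuous, the integrable simple functions are dense, and any $\chi_A$ with $\mu(A)<\infty$ is approximated in $E$-norm by $\chi_B$ with $B$ a finite union of dyadic intervals: indeed $\|\chi_A-\chi_B\|_E=\phi_E(\mu(A\triangle B))$ by rearrangement invariance, while $\phi_E(t)\to 0$ as $t\to 0^{+}$ since $\chi_{(0,t)}\downarrow 0$ and $E\in(OC)$. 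Combining density with the uniform bound $\|P_n\|\le 1$ through $\|P_n f-f\|_E\le 2\|f-g\|_E+\|P_n g-g\|_E$ yields $\|P_n f-f\|_E\to 0$ for all $f$. The hard part is precisely this convergence on all of $E$: it hinges on having both the exact contraction property and the density of the functions that $P_n$ eventually fixes, and on treating the tail in the case $\alpha=\infty$ correctly (which is why the truncation to $[0,2^{n})$ is built into $P_n$).

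Finally, I would upgrade pointwise to uniform convergence on a compact set $K$ by the standard equicontinuity argument already used in Lemma \ref{XinP} and Theorem \ref{thm1}. Given $\eps>0$, choose a finite $\eps/3$-net $\{f_1,\dots,f_r\}$ of $K$ and $n_0$ with $\|P_n f_i-f_i\|_E<\eps/3$ for all $i$ and $n\ge n_0$; then for an arbitrary $f\in K$, picking $f_i$ with $\|f-f_i\|_E<\eps/3$ and using $\|P_n\|\le 1$ gives $\|P_n f-f\|_E<\eps$. Thus $T=P_{n_0}$ is a finite-rank operator with $\|T\|\le 1$ satisfying $\sup_{f\in K}\|f-Tf\|_E<\eps$, which is the metric approximation property.
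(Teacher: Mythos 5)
Your proposal is correct, and its skeleton is the same as the paper's: norm-one averaging operators over finite partitions, the identity on a dense class of step functions, then an upgrade to uniform convergence on compact sets. The implementation differs at two genuine points. First, the paper works with the net of \emph{all} finite collections of disjoint sets of finite positive measure, directed by refinement $\preceq$; this makes the fixing of simple functions automatic (any simple function's partition is eventually refined by the net), and the passage from pointwise convergence of the net to the metric approximation property is delegated to Proposition 4.3 of \cite{Ryan}. You instead use the concrete sequence of dyadic partitions (truncated to $[0,2^n)$ when $\alpha=\infty$), which forces an extra density lemma --- dyadic step functions are dense in $E$ --- that you supply correctly via regularity of Lebesgue measure and the identity $\|\chi_A-\chi_B\|_E=\phi_E(\mu(A\,\triangle\,B))$, with order continuity entering through $\phi_E(t)\rightarrow 0$ as $t\rightarrow 0^{+}$. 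Second, the norm bound: the paper shows $T_{\mathcal{A}}(x)\prec S_{\mathcal{B}}(x)$ in the Hardy--Littlewood--P\'olya order and combines the contraction property of $S_{\mathcal{B}}$ (Theorem 4.8 of \cite{BS}) with $K$-monotonicity, whereas you observe that $P_n$ is simultaneously a contraction on $L^{1}$ and on $L^{\infty}$ and invoke exactness of interpolation, which by the Calder\'on--Mityagin equivalence recalled in this section is the same underlying fact; both routes ultimately rest on order continuous symmetric spaces being $K$-monotone. What each buys: your version gives an explicit sequence of operators and a self-contained $\varepsilon/3$-net argument in place of the citation of \cite{Ryan}, while the paper's net formulation avoids the measure-theoretic approximation by dyadic sets entirely. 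One small point you leave implicit and should state: $P_n$ is well defined on $E$ because every symmetric space embeds into $L^{1}+L^{\infty}$, so elements of $E$ are integrable over sets of finite measure.
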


\begin{proof}
	For the sake of completeness and reader’s convenience we present all details of the proof of the following theorem. In some parts of the proof we use similar technique presented in \cite{Ryan} for the space $L^p$. First, we define the finite rank operators $T_{\mathcal{A}}$ from $E$ into $E$ by
	\begin{equation*}
	T_{\mathcal{A}}(x)=\sum_{j=1}^n\left(\frac{1}{\mu(A_j)}\int_{A_j}xd\mu\right)\chi_{A_j}
	\end{equation*}
	for every $x\in{E}$, where $\mathcal{A}=\{A_1,\dots,A_n\}$ is a finite collection of pairwise disjoint measurable subsets of $I$, with finite positive measure. Next, taking $\mathcal{B}=\{B_j:j\in J\}$ any countable collection of pairwise disjoint subsets of $I$ with finite positive measure and assuming that $\Omega=I\setminus\bigcup_{j\in J}{B_j}$, by Theorem 4.8 \cite{BS} it is well known that the following operator 
	\begin{equation*}
	S_{\mathcal{B}}(x)=x\chi_{\Omega}+\sum_{j\in J}\left(\frac{1}{\mu(B_j)}\int_{B_j}xd\mu\right)\chi_{B_j}
	\end{equation*}
	is a contraction in $E$, i.e. for any $x\in{E}$ we have $\norm{S_{\mathcal{B}}(x)}{E}{}\leq\norm{x}{E}{}.$ Furthermore, letting that $\mathcal{A}$ is a finite subcollection of the collection $\mathcal{B}$ we can easily observe that $T_A(x)\prec{S_{\mathcal{B}}(x)}$ for any $x\in{E}$. Hence, since $E$ is symmetric it follows that $$\norm{T_{\mathcal{A}}(x)}{E}{}\leq\norm{S_{\mathcal{B}}(x)}{E}{}\leq\norm{x}{E}{}$$ for any $x\in{E}$. Therefore, for any finite subcollection $\mathcal{A}$ of the collection $\mathcal{B}$ we have $\norm{T_{\mathcal{A}}}{}{}\leq{1}$. Next, considering $\mathcal{A}$, $\mathcal{A'}$ any two finite collections of pairwise disjoint measurable subsets of $I$ with finite positive measure we introduce a relation $\preceq$ as follows $\mathcal{A}\preceq\mathcal{A'}$ if every set of $\mathcal{A}$ is the union of a subcollection of $\mathcal{A'}$. So, we obtain a directed set $\mathcal{D}$ with a preorder $\preceq$. Now, we show that for any $x\in{E}$ the net $(T_{\mathcal{A}}(x))$ converges to $x$ in $E$. First, by assumption that $E$ is order continuous it is enough to prove that the wanted convergence is satisfied for any simple function in $E$. Therefore, taking a simple function $x\in{E}$ such that
	\begin{equation*}
	x=\sum_{j=1}^nc_j\chi_{A_j},
	\end{equation*} 
	where $\{c_1,\dots,c_n\}\subset\mathbb{R}$ and $\mathcal{A}=\{A_1,\dots,A_n\}$ is a collection of pairwise disjoint measurable subset of $I$ with finite positive measure, we conclude that $T_{\mathcal{A'}}(x)=x$ for $\mathcal{A'}$ any finite collection of such sets with $\mathcal{A}\preceq\mathcal{A'}$. Let $x\in{E}$ and $(x_n)\subset{E}^{+}$ be a sequence of simple functions such that $0\leq x_n\leq|x|$ and $x_n\rightarrow{x}$ a.e. on $I$. Therefore, since $E$ is order continuous this yields that $\norm{x_n-x}{E}{}\rightarrow{0}$. Next, since the net $(T_{\mathcal{A'}})$ is uniformly bounded for any $\mathcal{A'}\in\mathcal{D}$ we get
	\begin{equation*}
	\norm{T_{\mathcal{A'}}(x-x_n)}{E}{}\leq\norm{x-x_n}{E}{}.
	\end{equation*}
	In consequence, we have
	\begin{align}\label{equ:1:approx:1}
	\norm{T_{\mathcal{A'}}(x)-x}{E}{}&\leq\norm{T_{\mathcal{A'}}(x)-T_{\mathcal{A'}}(x_n)}{E}{}+\norm{T_{\mathcal{A'}}(x_n)-x}{E}{}\\ 
	&=\norm{T_{\mathcal{A'}}(x-x_n)}{E}{}+\norm{T_{\mathcal{A'}}(x_n)-x}{E}{}\nonumber\\
	&\leq\norm{x-x_n}{E}{}+\norm{T_{\mathcal{A'}}(x_n)-x}{E}{}\nonumber
	\end{align}
	for any $\mathcal{A'}\in\mathcal{D}$. Let $\epsilon>0$. Then, there exists $n_0\in\mathbb{N}$ such that for all $n\geq{n_0}$ we have 
	\begin{equation}\label{equ:2:approx:1}
	\norm{x-x_n}{E}{}<\epsilon/2.
	\end{equation}
	Next, choosing a simple function $$x_n=\sum_{j=1}^{k_n}c_j^{(n)}\chi_{A_j^{(n)}}$$ such that $n\geq{n_0}$ and $\mathcal{A}=\{A_1^{(n)},\dots,A_{k_n}^{(n)}\}$ is a finite collection of pairwise disjoint measurable subsets of $I$ with finite positive measure and assuming that $\mathcal{A}\preceq\mathcal{A'}$ where $\mathcal{A'}$ is a finite collection of such sets we conclude that $T_{\mathcal{A'}}(x_n)=x_n$. In consequence, by \eqref{equ:1:approx:1} and \eqref{equ:2:approx:1} it follows that
	\begin{align*}
	\norm{T_{\mathcal{A'}}(x)-x}{E}{}&\leq\norm{x-x_n}{E}{}+\norm{T_{\mathcal{A'}}(x_n)-x}{E}{}\\
	&=2\norm{x-x_n}{E}{}<{\epsilon}.
	\end{align*}
	Finally, by Proposition 4.3 \cite{Ryan} we get $E$ has the metric approximation property.
\end{proof}

\begin{cor}\label{coro:approx:1}
	Let $E$ be a symmetric space on $I$ and let its dual space $E^*$ be order continuous. Then, for any Banach space $X$ we have $\mathcal{K}(E,X)=E^*\hat{\otimes}_{\epsilon}X$.
\end{cor}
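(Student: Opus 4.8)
The plan is to reduce the identity $\mathcal{K}(E,X)=E^*\hat{\otimes}_{\epsilon}X$ to two classical facts about the approximation property, using Theorem \ref{thm:approx:1} to supply the approximation property of the dual. First I would recall the standard isometric identification of the injective tensor product with approximable operators: the assignment $\phi\otimes x\mapsto(e\mapsto\phi(e)x)$ maps $E^*\otimes X$ into the space $\mathcal{F}(E,X)$ of finite rank operators inside $\mathcal{B}(E,X)$, and a direct computation shows that for $u=\sum_i\phi_i\otimes x_i$ the injective tensor norm $\eps(u)=\sup\{|\sum_i\langle\phi_i,e\rangle\langle x^*,x_i\rangle|:\|e\|_E\le 1,\ \|x^*\|_{X^*}\le 1\}$ equals the operator norm of the associated operator. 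Passing to completions yields an isometric identification of $E^*\hat{\otimes}_{\epsilon}X$ with the operator-norm closure $\overline{\mathcal{F}(E,X)}$; in particular $E^*\hat{\otimes}_{\epsilon}X\subseteq\mathcal{K}(E,X)$, since a limit in operator norm of finite rank (hence compact) operators is compact, and this inclusion is onto a closed subspace of $\mathcal{K}(E,X)$.

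Next I would establish the approximation property of the dual. By hypothesis the dual $E^*$ is order continuous; since the order continuous part of the dual of a symmetric space with the Fatou property is exactly its associate space $E'$, order continuity forces $E^*=E'$, which is a symmetric Banach function space on $I$. Consequently Theorem \ref{thm:approx:1} applies to $E^*$ and shows that $E^*$ has the metric approximation property, and in particular the approximation property.

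Then I would invoke the Grothendieck characterization of the approximation property in its dual form (see \cite{Ryan}): a Banach space $Y$ has the property that $Y^*$ enjoys the approximation property if and only if, for every Banach space $Z$, the finite rank operators $\mathcal{F}(Y,Z)$ are dense in the compact operators $\mathcal{K}(Y,Z)$. Applying this with $Y=E$ and $Z=X$, the approximation property of $E^*$ gives that $\mathcal{F}(E,X)$ is dense in $\mathcal{K}(E,X)$, that is $\overline{\mathcal{F}(E,X)}=\mathcal{K}(E,X)$. Combining this with the first step yields $E^*\hat{\otimes}_{\epsilon}X=\overline{\mathcal{F}(E,X)}=\mathcal{K}(E,X)$, as claimed.

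The routine verifications are the norm computation of the first step and the duality facts identifying $E^*$ with an order continuous symmetric function space. The only place where the specific content of this paper enters is the appeal to Theorem \ref{thm:approx:1} to guarantee the approximation property of $E^*$; once that property is available the corollary is a formal consequence of Grothendieck's theorem, so I expect the genuine obstacle to lie in confirming that the hypotheses of Theorem \ref{thm:approx:1} are indeed met by the dual space, rather than in producing any new estimate.
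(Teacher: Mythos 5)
Your overall route is the same as the paper's: apply Theorem \ref{thm:approx:1} to $E^*$ to obtain the metric approximation property, then identify $E^*\hat{\otimes}_{\epsilon}X$ with the closure of the finite rank operators and invoke Grothendieck's duality characterization of the approximation property. Your first and third steps are precisely an unpacking of Corollary 4.13 of \cite{Ryan}, which is what the paper cites, so there is no divergence there; those steps are correct.

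The genuine gap is in your second step. The claim that ``the order continuous part of the dual of a symmetric space with the Fatou property is exactly its associate space $E'$,'' and hence that order continuity of $E^*$ forces $E^*=E'$, is false under the standard reading of the hypothesis (order continuity of the norm of the Banach lattice $E^*$). Take $E=L^\infty(I)$: it is a symmetric space with the Fatou property, its dual is an AL-space (being the dual of an AM-space) and therefore has order continuous norm, and yet $E^*$ strictly contains $E'=L^1$, since $L^\infty$ carries nonzero singular functionals. What is true is that $E'$ coincides with the band of order continuous \emph{functionals} on $E$ (those annihilating sequences decreasing to zero); this is a different notion from the set of points of order continuity of the norm of $E^*$, and the equality $E^*=E'$ is in fact equivalent to order continuity of $E$ itself (cf. \cite{BS}), which is not assumed here. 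Consequently, under the lattice reading of the hypothesis your step 2 collapses, and with it the applicability of Theorem \ref{thm:approx:1}, which requires $E^*$ to be an order continuous symmetric \emph{function space on} $I$. In fairness, the paper's one-line proof has the identical gap: it simply asserts that ``$E^*$ is symmetric'' with no justification. Both arguments become correct if the hypothesis is read as saying that $E^*$ coincides with the associate space $E'$ and that this symmetric space is order continuous; under the purely lattice-theoretic reading, a space such as $L^\infty$ needs a separate argument (there the conclusion still holds, but only because $(L^\infty)^*$ is an abstract $L^1$-space and hence has the metric approximation property for reasons outside Theorem \ref{thm:approx:1}).
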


\begin{proof}
	Immediately, since $E^*$ is symmetric, by Theorem \ref{thm:approx:1} it follows that $E^*$ has the metric approximation property. Then, by Corollary 4.13 \cite{Ryan} we get $\mathcal{K}(E,X)$ the space of all compact operators form $E$ to $X$ coincides with the tensor product $E^*\hat{\otimes}_\epsilon{X}$ equipped with the injective norm.
\end{proof}

\begin{cor}\label{coro:approx:2}
	Let $E$ be a symmetric space on $I$. If $E$ is reflexive, then the following assertion are equivalent.
	\begin{itemize}
		\item[$(i)$] $K(E,E)=E\hat{\otimes}_\epsilon{E^*}$ is reflexive.
		\item[$(ii)$] $E\hat{\otimes}_\pi{E^*}$ is reflexive.
		\item[$(iii)$] Every linear bounded operator $T$ from $E$ into $E$ is compact.
	\end{itemize}
\end{cor}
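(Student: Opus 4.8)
The plan is to reduce all three conditions to statements about operator spaces through tensor--product duality, using the identification supplied by Corollary \ref{coro:approx:1}. First I would record the structural consequences of reflexivity: since $E$ is reflexive it is order continuous, so its dual $E^*$ coincides with the K\"othe associate space and is itself a reflexive, order continuous symmetric space carrying the Radon--Nikod\'ym property; the same holds for $E$. Applying Theorem \ref{thm:approx:1} both to $E$ and to $E^*$ shows that each has the metric approximation property, hence the approximation property. Next, applying Corollary \ref{coro:approx:1} with $X=E$ and using the commutativity of the injective tensor norm, I obtain the isometric identification $E\hat{\otimes}_\epsilon E^*=E^*\hat{\otimes}_\epsilon E=\mathcal{K}(E,E)$, so that $(i)$ is precisely the assertion that $\mathcal{K}(E,E)$ is reflexive.

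The core of the argument is a short duality chain. On the one hand, the dual of a projective tensor product is the space of bounded bilinear forms, whence
\[
(E\hat{\otimes}_\pi E^*)^*=\mathcal{L}(E,E^{**})=\mathcal{L}(E,E),
\]
the last equality by reflexivity of $E$. On the other hand, since $E^{**}=E$ has both the approximation property and the Radon--Nikod\'ym property, every integral operator from $E$ into $E$ is nuclear, so that
\[
(E\hat{\otimes}_\epsilon E^*)^*=\mathcal{I}(E,E^{**})=\mathcal{N}(E,E)=E^*\hat{\otimes}_\pi E=E\hat{\otimes}_\pi E^*.
\]
Composing these identifications gives $(E\hat{\otimes}_\epsilon E^*)^{**}=\mathcal{L}(E,E)$, and one checks that under this duality the canonical embedding of $\mathcal{K}(E,E)=E\hat{\otimes}_\epsilon E^*$ into its bidual is exactly the inclusion $\mathcal{K}(E,E)\hookrightarrow\mathcal{L}(E,E)$.

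With the chain in place the equivalences are immediate. For $(i)\Leftrightarrow(iii)$: $\mathcal{K}(E,E)$ is reflexive if and only if it coincides with its bidual $\mathcal{L}(E,E)$, that is, if and only if every bounded linear operator on $E$ is compact. For $(i)\Leftrightarrow(ii)$: by the second identification $E\hat{\otimes}_\pi E^*$ is isometrically the dual of $E\hat{\otimes}_\epsilon E^*=\mathcal{K}(E,E)$, and a dual Banach space is reflexive exactly when its predual is (a closed subspace of a reflexive space being reflexive); hence $E\hat{\otimes}_\pi E^*$ is reflexive if and only if $\mathcal{K}(E,E)$ is. This closes the cycle.

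The main obstacle is the second identification in the duality chain, namely $(E\hat{\otimes}_\epsilon E^*)^*=E\hat{\otimes}_\pi E^*$. A priori the dual of the injective tensor product is only the space of integral operators $\mathcal{I}(E,E^{**})$, and collapsing integral operators to nuclear ones---and thus to the projective tensor product---requires precisely the approximation property together with the Radon--Nikod\'ym property established in the first step. Here I would invoke the corresponding duality theorem from \cite{Ryan}; the delicate point is to verify that its hypotheses are genuinely met in the present setting and that the natural maps are the expected inclusions, after which the two reflexivity equivalences follow formally.
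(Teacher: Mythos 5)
Your proof is correct and follows essentially the same route as the paper: both arguments deduce order continuity from reflexivity, apply Theorem \ref{thm:approx:1} to get the metric approximation property, use Corollary \ref{coro:approx:1} (plus commutativity of $\hat{\otimes}_\epsilon$) to identify $\mathcal{K}(E,E)$ with $E\hat{\otimes}_\epsilon E^*$, and then rely on the tensor-product duality theory of \cite{Ryan}. The only difference is that the paper cites Theorem 4.21 of \cite{Ryan} as a black box, whereas you unpack its proof (the chain $(E\hat{\otimes}_\epsilon E^*)^*=\mathcal{I}(E,E)=\mathcal{N}(E,E)=E\hat{\otimes}_\pi E^*$ and $(E\hat{\otimes}_\pi E^*)^*=\mathcal{L}(E,E)$, with the canonical embedding being the inclusion $\mathcal{K}\hookrightarrow\mathcal{L}$).
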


\begin{proof}
	Since $E$ is reflexive symmetric space, by Corollary 4.4 \cite{BS} we get $E$ is order continuous. In consequence, by Theorem \ref{thm:approx:1} it follows that $E$ has the metric approximation property. Next, since $E^*$ is reflexive we conclude that
	\begin{equation*}
	E^*\hat{\otimes}_\epsilon{E^{**}}=E^*\hat{\otimes}_\epsilon{E}=	E\hat{\otimes}_\epsilon{E^{*}},
	\end{equation*}
	whence, by Theorem 4.21 in \cite{Ryan} and by Corollary \ref{coro:approx:1} the end of the proof is completed. 
\end{proof}

The immediate consequence of Theorem 1.e.4 \cite{LinTza} and Theorem \ref{thm:approx:1} is the following result.
\begin{cor}\label{coro:approx:3}
	Let $E$ be a symmetric space. If $E$ is order continuous, then for any Banach space $X$ every compact operator from $X$ into $E$ is approximable. 
\end{cor}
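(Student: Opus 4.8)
The plan is to exhibit, for every compact $K\subset E$ and every $\eps>0$, a finite rank operator $T\colon E\to E$ with $\|T\|\le 1$ and $\sup_{x\in K}\|x-Tx\|_E<\eps$. I would obtain such a $T$ from a directed family of conditional-expectation operators that are contractions on $E$ and converge strongly to the identity, and then invoke the standard net characterization of the metric approximation property. Concretely, for a finite collection $\mathcal{A}=\{A_1,\dots,A_n\}$ of pairwise disjoint measurable sets of finite positive measure I would set
$$
T_{\mathcal{A}}(x)=\sum_{j=1}^n\left(\frac{1}{\mu(A_j)}\int_{A_j}x\,d\mu\right)\chi_{A_j},
$$
which is finite rank since its range lies in $\mathrm{span}\{\chi_{A_1},\dots,\chi_{A_n}\}$.

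The decisive step is to prove that each $T_{\mathcal{A}}$ is a contraction, $\|T_{\mathcal{A}}\|\le 1$. For this I would enlarge $\mathcal{A}$ to a countable collection $\mathcal{B}$ and consider the conditional expectation $S_{\mathcal{B}}$ that averages over the sets of $\mathcal{B}$ and acts as the identity on the complement; by the contractivity of conditional expectations on rearrangement invariant spaces (Theorem 4.8 of \cite{BS}) one has $\|S_{\mathcal{B}}(x)\|_E\le\|x\|_E$. The key majorization is that averaging over the coarser data of $\mathcal{A}$ can only decrease the maximal function, so $T_{\mathcal{A}}(x)\prec S_{\mathcal{B}}(x)$ in the Hardy--Littlewood--P\'olya order whenever $\mathcal{A}$ is a finite subcollection of $\mathcal{B}$. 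Since $E$ is symmetric with the Fatou property it is $K$-monotone, and therefore $T_{\mathcal{A}}(x)\prec S_{\mathcal{B}}(x)$ yields $\|T_{\mathcal{A}}(x)\|_E\le\|S_{\mathcal{B}}(x)\|_E\le\|x\|_E$. I expect this step, combining the majorization with $K$-monotonicity, to be the main obstacle: the contractivity on a general $E$ does not follow directly from the $L^1$ and $L^\infty$ estimates, and it is exactly here that the rearrangement invariant structure is indispensable.

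Next I would order the finite collections by refinement, writing $\mathcal{A}\preceq\mathcal{A'}$ when each set of $\mathcal{A}$ is a union of sets of $\mathcal{A'}$; this produces a directed set $\mathcal{D}$, and I would show that the net $(T_{\mathcal{A}}x)_{\mathcal{A}\in\mathcal{D}}$ converges to $x$ in $E$ for every $x\in E$. For a simple function $x$ one has $T_{\mathcal{A'}}(x)=x$ as soon as $\mathcal{A'}$ refines the partition carrying $x$, so the claim is immediate there. For general $x$ I would invoke order continuity to choose simple $x_n\to x$ in norm with $0\le|x_n|\le|x|$, and then exploit the uniform bound $\sup_{\mathcal{A}}\|T_{\mathcal{A}}\|\le 1$ through the estimate
$$
\|T_{\mathcal{A'}}(x)-x\|_E\le\|x-x_n\|_E+\|T_{\mathcal{A'}}(x_n)-x\|_E,
$$
fixing $n$ large and then refining $\mathcal{A'}$ past the partition of $x_n$ so that both terms fall below $\eps$.

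Finally, having produced a net of finite rank operators of norm at most one converging strongly to the identity, I would conclude that $E$ has the metric approximation property by the standard net criterion (Proposition 4.3 of \cite{Ryan}), which upgrades strong convergence on the whole space to uniform closeness on the compact set $K$. The only genuine analytic input beyond routine bookkeeping is the contractivity in the second paragraph; the remaining steps are a density argument using order continuity together with an application of the cited characterization.
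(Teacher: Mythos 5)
There is a genuine gap: your proposal proves a different statement from the one asked. Everything you construct --- the averaging operators $T_{\mathcal{A}}$, the contraction estimate via $S_{\mathcal{B}}$, the Hardy--Littlewood--P\'olya majorization combined with $K$-monotonicity, the refinement net, and the appeal to Proposition 4.3 of \cite{Ryan} --- establishes that $E$ has the metric approximation property. That is precisely Theorem \ref{thm:approx:1} of the paper, which the corollary is entitled simply to quote, so re-deriving it is redundant (though your re-derivation does follow the paper's proof of that theorem faithfully). What the corollary actually asserts involves an arbitrary Banach space $X$ and an arbitrary compact operator $T\colon X\to E$, and your proposal never mentions either: at no point do you pass from ``finite rank operators on $E$ approximate the identity on compact subsets of $E$'' to ``every compact operator from $X$ into $E$ is a norm limit of finite rank operators.''

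That bridge is the one step specific to the corollary, and it is exactly what the paper cites Theorem 1.e.4 of \cite{LinTza} for. It can also be argued directly in a few lines: if $T\colon X\to E$ is compact, then $K=\overline{T(B_X)}$ is a compact subset of $E$; given $\eps>0$, the approximation property of $E$ (metric or not) yields a finite rank operator $S\colon E\to E$ with $\norm{Sy-y}{E}{}<\eps$ for all $y\in K$, and then $S\circ T$ is a finite rank operator from $X$ into $E$ satisfying $\norm{S\circ T-T}{}{}=\sup\{\norm{S(Tx)-Tx}{E}{}:\norm{x}{X}{}\le 1\}\le\eps$, so $T$ is approximable. Without this paragraph (or the citation replacing it), your argument stops at the metric approximation property of $E$ and never reaches the conclusion of the corollary.
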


\end{document}